\newtheorem{theorem}{Theorem}[section]
\newtheorem{prop}[theorem]{Proposition}
\newtheorem{lemma}[theorem]{Lemma}
\newtheorem{example}[theorem]{Example}
\newtheorem{corollary}[theorem]{Corollary}
\theoremstyle{definition}
\newtheorem{problem}[theorem]{Problem}
\theoremstyle{remark}
\newtheorem{remark}[theorem]{Remark}
\numberwithin{equation}{section}
\newcommand{\htop}{h_{\mathrm{top}}}
\newcommand{\R}{{\mathbb R}}          
\newcommand{\N}{{\mathbb N}}           
\newcommand{\II}{{\mathbf I}}          
\newcommand{\ww}{\widetilde}
\newcommand{\Int}{\operatorname{int}}           
\newcommand{\id}{\operatorname{id}}             
\begin{document}

\title{Subspaces of interval maps related to the topological entropy}
\author{Xiaoxin Fan, Jian Li, Yini Yang and Zhongqiang Yang}
\address{Department of Mathematics,
Shantou University, Shantou, Guangdong, 515063, China
P.R.}
\email{[X. Fan]14xxfan@alumni.stu.edu.cn}
\email{[J. Li]lijian09@mail.ustc.edu.cn}
\email{[Y.Yang]ynyangchs@foxmail.com}
\email{[Z.Yang]zqyang@stu.edu.cn}

\begin{abstract}%
	For $a\in [0,+\infty)$,  the function space $E_{\geq a}$ ($E_{>a}$; $E_{\leq a}$; $E_{<a}$) of  all continuous maps from $[0,1]$ to itself whose topological entropies are larger than or equal to $a$ (larger than  $a$; smaller than or equal to $a$; smaller than $a$) with the supremum metric is investigated.
	It is shown that the spaces $E_{\geq a}$ and $E_{>a}$ are homeomorphic to the Hilbert space $l_2$ and the spaces $E_{\leq a}$ and  $E_{<a}$ are contractible. Moreover, the subspaces of $E_{\leq a}$ and  $E_{<a}$ consisting of all piecewise monotone maps are homotopy dense in them, respectively.
\end{abstract}

\keywords{Interval maps; topological entropy;
the Hilbert space $l_2$; homotopy dense; contractible}
\subjclass[2010]{37E05, 54F65, 54H20}
\thanks{X. Fan and Z. Yang was supported in part by the NNSF of China (11471202, 11971287);
	J. Li (corresponding author) and Y. Yang  was supported in part by the NNSF of China (1771264, 11871188)
	and NSF of Guangdong Province (2018B030306024)}

\maketitle

\section{Introduction}
One of the central topics in the study of infinite-dimensional topology
is that which kinds of function spaces  are homeomorphic to
the separable infinite dimensional Hilbert space $l_2$ or its well-behaved subspaces.
The well-known Anderson-Kadec's theorem states that
the countable infinite product $\R^\N$ of lines is homeomorphic to $l_2$, see \cite{Anderson,Kadec}. Using this result,
it was proved that the function space of real valued maps of an infinite compact metric space with the supremum metric is homeomorphic to $l_2$.
See \cite{Chigogidze-2001,van-book-1989,van-book-2001} for more on this topic. Moreover, in \cite{Dobrowolski-Marciszewski-1991}, the authors proved that the function space of real valued maps of an
infinite countable metric space with the topology of pointwise convergence is homeomorphic to  the subspace $c_0=\{(x_n)\in \R^\N:\lim\limits_{n\to\infty} x_n=0\}$ of  $\R^\N$.
In a series of papers, the fourth named author of the present paper and his coauthors gave a condition for the continuous functions from a k-space to $\II=[0,1]$ with the Fell hypergraph topology being homeomorphic to $c_0$, see \cite{Yang-Yang-Zhen,Yang-Chen-Zheng, Yang-Yan-2014, Yang-Zheng-Chen}.

In the study of dynamical systems, some function spaces naturally appear.
The group of measure preserving transformations of the unit interval
equipped with the weak topology is homeomorphic
to $l_2$ (see \cite{D86} and \cite{N91}).
Recently, in \cite{Kolyada-2151}
Kolyada et al. proposed the study of dynamical topology:
investigating the topological properties of spaces of maps that can
be described in dynamical terms.
They showed in \cite{Kolyada-2151} that the space of transitive
interval maps is contractible and
uniformly locally arcwise connected, see also \cite{Kolyada-137} for more detailed results. In \cite {Grinc}, Grinc et al. discussed
some topological properties of subspaces of interval maps related to the periods of periodic points.

In this paper, we will follow the idea in \cite{Kolyada-2151}
to study subspaces of interval maps 
related to the topological entropy.
Let  $\II=[0,1]$ and $C(\II)$ be the collection of
continuous maps on $\II$ with the supremum metric $d$.
For each $f\in C(\II)$, denote by $\htop(f)$ the topological entropy of $f$.
For any  $a\in [0,+\infty]$, let
\begin{align*}
E_{\geq a}&=\{f\in C(\II):\htop(f)\geq a\};\\
E_{> a}&=\{f\in C(\II):\htop(f)> a\};\\
E_{\leq a}&=\{f\in C(\II):\htop(f)\leq a\};\\
E_{< a}&=\{f\in C(\II):\htop(f)< a\}.
\end{align*}
A map $f\in C(\II)$ is said to be \textbf{piecewise monotone} if there exist $0=t_0<t_1<\cdots<t_n=1$ such that $f|_{[t_{i-1},t_i]}$ is monotone for every $i=1,2,\dotsc,n$.
Similarly, we can define a map to be \textbf{piecewise linear}.
We use $C^{PM}(\II)$ to denote the set of all piecewise monotone continuous maps on $\II$ and $$E^{PM}_{\leq a}=E_{\leq a}\cap C^{PM}(\II).$$ 

The main results of this paper are as follows:

\begin{theorem}\label{thm:main1}
	For every $a\in [0,+\infty)$,
	both $ E_{\geq a}$ and  $E_{>a}$ are homeomorphic to $l_2$.
\end{theorem}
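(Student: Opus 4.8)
The plan is to realize both $E_{\geq a}$ and $E_{>a}$ as \emph{homotopy dense}, completely metrizable (Polish) subspaces of $C(\II)$ and then to feed this into the infinite-dimensional machinery. First recall that $C(\II)$, being a closed, convex, non-locally-compact subset of the separable Fréchet space $C(\II,\R)$, is homeomorphic to $l_2$. Next, the topological entropy $f\mapsto\htop(f)$ is lower semicontinuous on $C(\II)$ (the presence of an $(n,k)$-horseshoe is an open condition, and such horseshoes compute the entropy in the sense of Misiurewicz--Szlenk); hence $E_{>a}=\htop^{-1}((a,+\infty])$ is open and $E_{\geq a}=\bigcap_{m}\htop^{-1}((a-\tfrac1m,+\infty])$ is a $G_\delta$, so both spaces are separable and completely metrizable. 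The reduction I would then invoke is the following consequence of Toru\'nczyk's characterization of $l_2$: a Polish space that is homotopy dense in an AR homeomorphic to $l_2$ is itself homeomorphic to $l_2$ (homotopy density makes it a Polish AR, and it inherits the discrete approximation property from the ambient $l_2$). Everything thus reduces to producing a homotopy $h\colon C(\II)\times\II\to C(\II)$ with $h_0=\id$ and $h_t(C(\II))\subseteq E_{\geq a}$ (resp. $\subseteq E_{>a}$) for all $t>0$.

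To build $h$ I would force entropy by inserting a small, high, fixed-shape oscillation. Fix an integer $k$ with $\log k\geq a$ (resp. $\log k>a$). The model gadget is: on a short interval $J$ with $f(J)$ close to $J$, replace $f|_J$ by a piecewise linear $k$-tooth map $w_J$ with $w_J(J)=J$, each of whose $k$ laps maps monotonically onto $J$; this is a $k$-horseshoe, so any map containing it has entropy $\geq\log k\geq a$, while $\|w_J-f\|_\infty$ is controlled by $\diam J$ and the oscillation of $f$ on $J$ and hence can be made arbitrarily small. Letting the governing intervals shrink as $t\to0$ yields $h_t(f)\to f$ and continuity at $t=0$.

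The main obstacle is \emph{continuity in $f$}. A $k$-horseshoe created by an arbitrarily small perturbation must live on an interval $J$ with $f(J)\approx J$, i.e. near a fixed point of $f$; but the fixed-point set does not depend continuously on $f$ and admits no continuous selection (fixed points appear and disappear in saddle-node fashion), so one cannot simply track a single location. I would resolve this by \emph{activating every location at once} rather than selecting one. Take a finite cover of $\II$ by short overlapping intervals $J_1,\dots,J_N$ with large overlaps and a definite Lebesgue number, and for each $j$ a continuous near-identity indicator $\mu_j(f)\in[0,1]$ that equals $1$ when $f(J_j)$ is close to $J_j$ and $0$ when $f(J_j)$ is far from $J_j$. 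On each $J_j$ let $h_t(f)$ interpolate continuously, as $\mu_j(f)$ runs from $0$ to $1$, between $f|_{J_j}$ and the full horseshoe $w_{J_j}$, and keep $h_t(f)=f$ off $\bigcup_j J_j$. Since every $f\in C(\II)$ has a fixed point, and by the Lebesgue number that point lies well inside some $J_{j_0}$, we get $\mu_{j_0}(f)=1$, so at least one genuine $k$-horseshoe is always present and $\htop(h_t(f))\geq\log k\geq a$; because the construction never has to choose \emph{which} $J_j$ to use, it stays continuous in $f$ and the discontinuity of the fixed-point set becomes harmless.

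Finally, letting the common length of the $J_j$ and the amplitude of the teeth tend to $0$ as $t\to0$ gives $h_0=\id$ together with joint continuity, so $E_{\geq a}$ and $E_{>a}$ are homotopy dense in $C(\II)$; by the reduction above, each is homeomorphic to $l_2$. For $E_{>a}$ one could alternatively observe that it is an open, hence $l_2$-manifold, subset of $l_2$ and invoke the classification of $l_2$-manifolds by homotopy type, reducing the statement to contractibility; but the homotopy-density argument dispatches both cases uniformly.
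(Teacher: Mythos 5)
Your overall architecture coincides with the paper's: show that $E_{\geq a}$ and $E_{>a}$ are completely metrizable (via lower semicontinuity of $\htop$, the paper's Theorem~\ref{thm:entropy-LSC} and Corollary~\ref{cor:Ea-open}) and homotopy dense in $C(\II)\cong l_2$, then invoke the Toru\'nczyk-type characterization (the paper's Corollary~\ref{cor:CI-subspace}, via Banakh--Radul--Zarichnyi). Your observation that contractibility comes for free from homotopy density --- a homotopy dense subspace is homotopy equivalent to the ambient space --- is correct and slightly streamlines the paper's formulation, which lists contractibility as a separate hypothesis and verifies it by appending an extra homotopy $\widehat{H}^{\alpha}$ that pushes everything to a single box map. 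You also correctly identify the crux, namely the non-existence of a continuous selection of fixed points (which the paper makes explicit in a proposition at the end of Section~3), and the correct way around it: perform the high-entropy modification everywhere at once and let the guaranteed fixed point certify one genuine horseshoe.

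The gap is in the construction of the homotopy itself, which is the heart of the proof. Your cover $J_1,\dots,J_N$ has ``large overlaps'', and on each $J_j$ you prescribe a modification of $f$; on $J_j\cap J_{j+1}$ these prescriptions conflict and you never say how they are reconciled, so $h_t(f)$ is not well defined as written (and ``keep $h_t(f)=f$ off $\bigcup_j J_j$'' is vacuous since the $J_j$ cover $\II$). The natural fix --- blending with a partition of unity --- is exactly where the danger lies: convex combinations interact badly with entropy (the paper's two examples show that neither $E_{>a}$ nor $E_{\leq 0}$ is convex), a blend on the overlap can destroy laps of $w_{J_{j_0}}$, and $w_{J_j}$ (with range $J_j$) need not match $f$ at $\partial J_j$, so continuity of the glued map is also unaddressed. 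All of this is repairable, but the repair is essentially the paper's box-map device: partition $\II$ into \emph{non-overlapping} intervals $I_i$ of length $t$ and on each $I_i$ replace $f$ by the piecewise linear box map with parameter $\alpha\ge 20$ onto $[\min f(I_i)-4\alpha_i,\,\max f(I_i)+4\alpha_i]\cap\II$, where $\alpha_i=\max\{|I_i|,|f(I_i)|\}$, agreeing with $f$ at $\partial I_i$. Because the target interval is pinned to $f(I_i)$, the perturbation is automatically small and jointly continuous, no indicator functions are needed, and the enlargement by $4\alpha_i$ guarantees that the interval containing a fixed point satisfies $I_i\subset[a^i_b,a^i_t]$, so the $[\alpha-4]$ laps form a horseshoe and $\htop\geq\log([\alpha-4])>a$. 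You should either adopt that device or supply the missing verification that your blended map is well defined, continuous in $(f,t)$, uniformly close to $f$ as $t\to 0$, and still carries a full $k$-horseshoe on the distinguished $J_{j_0}$.
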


\begin{theorem}\label{thm:main2}
	There exists a homotopy $H:C(\II)\times\II\to C(\II)$  such that 
	\begin{enumerate}
		\item $H_0=\id_{C(\II)}$;
		\item  $\htop(H_t(f))\leq \htop(f)$ and $H_t(f)\in C^{PM}(\II)$   for any $t\in(0,1)$ and $f\in C(\II)$;
		\item $H_1(f)\equiv 0$ for any $f\in C(\II)$.
	\end{enumerate}
\end{theorem}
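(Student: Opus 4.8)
The plan is to build $H$ as the composition of two much simpler deformations: an \emph{instantaneous piecewise-monotone replacement} that, for every $t>0$, pushes $f$ into $C^{PM}(\II)$ without increasing entropy and tends to $f$ as $t\to0^+$, followed by a \emph{shrinking conjugacy} that collapses the result to the zero map while keeping the entropy under control. Concretely, I would first produce two operators on $C(\II)$: a family $P_s\colon C(\II)\to C(\II)$ ($s\in\II$) with $P_0=\id$, with $P_s(f)\in C^{PM}(\II)$ and $\htop(P_s(f))\le\htop(f)$ for $s>0$, jointly continuous in $(f,s)$; and a family $F_\lambda\colon C(\II)\to C(\II)$ ($\lambda\in\II$) with $F_1=\id$, $F_0\equiv 0$, $\htop(F_\lambda(g))\le\htop(g)$, preserving $C^{PM}(\II)$ for $\lambda>0$, and jointly continuous in $(g,\lambda)$. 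Granting these, set
\[
H(f,t)=F_{1-t}\bigl(P_t(f)\bigr),\qquad (f,t)\in C(\II)\times\II .
\]
Then $H_0=F_1\circ P_0=\id$; for $t\in(0,1)$ the map $P_t(f)$ is piecewise monotone and $F_{1-t}$ preserves that class, so $H_t(f)\in C^{PM}(\II)$, while $\htop(H_t(f))\le\htop(P_t(f))\le\htop(f)$; and $H_1=F_0\circ P_1\equiv0$. Joint continuity of $H$, including at $t=0$ (where $P_t(f)\to f$ and $F_{1-t}\to\id$) and at $t=1$ (where $F_{1-t}\to F_0\equiv0$ uniformly in the bounded argument), follows from that of $P$ and $F$.

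The operator $F_\lambda$ is the routine ingredient. For $\lambda\in(0,1]$ define
\[
F_\lambda(g)(x)=
\begin{cases}
\lambda\,g(x/\lambda), & 0\le x\le\lambda,\\
\lambda\,g(1), & \lambda\le x\le 1,
\end{cases}
\]
and $F_0(g)\equiv0$. This is a self-map of $\II$ with values in $[0,\lambda]$, it is piecewise monotone whenever $g$ is, and it is jointly continuous in $(g,\lambda)$ (at $\lambda=0$ because $\|F_\lambda(g)\|\le\lambda$). For $\lambda>0$ the subinterval $[0,\lambda]$ is invariant and $F_\lambda(g)|_{[0,\lambda]}$ is conjugate to $g$ via $x\mapsto x/\lambda$, while $[\lambda,1]$ is mapped into $[0,\lambda]$ in one step; hence $\htop(F_\lambda(g))=\htop(g)$. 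I expect only bookkeeping here.

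The heart of the argument — and the main obstacle — is the construction of $P_s$, i.e.\ a piecewise-monotone (say piecewise linear) approximation of an arbitrary continuous interval map that does not raise the topological entropy and depends continuously on the map. The difficulty is that, by Misiurewicz's theorem, $\htop$ is only \emph{lower} semicontinuous on $C(\II)$, so the naive choice of $P_s(f)$ as the linear interpolation of $f$ on a mesh of size $s$ need not satisfy $\htop(P_s(f))\le\htop(f)$: a fine interpolation can manufacture a (possibly degenerate) horseshoe that $f$ does not robustly possess, thereby increasing entropy. Thus the approximation must be tailored to kill any such spurious complexity rather than merely chosen close to $f$.

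My proposed route is to combine a fine piecewise-linear approximation with a controlled \emph{fold-straightening}. Starting from a piecewise-linear $h$ that is $C^0$-close to $f$, I would use the Misiurewicz–Szlenk formula $\htop(g)=\lim_{n\to\infty}\frac1n\log\ell(g^n)$ for piecewise-monotone $g$ (where $\ell$ is the number of laps) together with the fact that removing a turning point — replacing a small oscillation by the monotone segment joining its endpoints — does not increase the lap growth rate, successively straightening those folds of $h$ whose removal costs only an $O(s)$ change in the supremum metric, until the entropy is brought down to at most $\htop(f)$. Lower semicontinuity then forces $\htop(P_s(f))\to\htop(f)$ and, in particular, $P_s(f)\to f$ as $s\to0$. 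The delicate points I anticipate are (i) proving that fold-straightening never increases entropy, which I would reduce to a monotonicity property of lap numbers under fold removal for piecewise-monotone maps, and (ii) arranging the whole replacement to be canonical enough to be jointly continuous in $(f,s)$ — most cleanly by defining $P_s$ through continuously varying nodes and a partition-of-unity blend of adjacent meshes, so that no discontinuous jump in the combinatorial data occurs.
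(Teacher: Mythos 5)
Your overall architecture (a piecewise-monotonization $P_s$ followed by a collapsing family $F_\lambda$) matches the paper's strategy in outline, and your $F_\lambda$ is a legitimate substitute for the paper's second and third homotopies (flooding $f$ up to the constant $M(f)$ via $x\mapsto\max\{f(x),\cdot\}$, then scaling that constant to $0$); the invariant-interval argument giving $\htop(F_\lambda(g))=\htop(g)$ is sound. The gap is exactly where you locate it: the operator $P_s$. Your proposed construction --- fine piecewise-linear interpolation followed by ``fold-straightening'' --- is not a proof. First, the key claim that replacing a small oscillation by the monotone segment joining its endpoints never increases topological entropy is asserted, not established; it does not follow from the Misiurewicz--Szlenk lap-count formula, because modifying the map changes all of its iterates, so the lap numbers $\ell(g^n)$ of the new map are not termwise comparable to those of the old one. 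Second, even granting that each straightening step is entropy-non-increasing, you give no reason why the process terminates with entropy at most $\htop(f)$: the interpolant $h$ may satisfy $\htop(h)>\htop(f)$ for global combinatorial reasons that are not localized in removable $O(s)$-folds, and ``straighten until the entropy comes down'' is not a well-defined, let alone canonical, procedure. Third, the joint continuity of $(f,s)\mapsto P_s(f)$ is left entirely to a hoped-for partition-of-unity blend; in the paper this continuity is one of the delicate points and occupies most of the corresponding proof.

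The paper closes this gap with a different and crucially \emph{canonical} monotonization. On each mesh interval $I_i$ it replaces $f|_{I_i}$ by its unimodal hull $\widetilde{f|_{I_i}}$: the running maximum from the left up to the first maximizer, the constant $M(f|_{I_i})$ across the maximizers, and the running maximum from the right afterwards. This hull agrees with $f$ at the endpoints of $I_i$, so the pieces glue; it has at most three laps per mesh interval, so the result is piecewise monotone; and --- this is the decisive point --- it differs from $f|_{I_i}$ only on countably many open intervals on each of which it is constant, i.e.\ it is obtained by the ``procedure of making constant pieces'', for which Lemma~\ref{lem 4} (Jim\'enez L\'opez--Snoha) guarantees that $\htop$ does not increase. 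That lemma is the entropy-control mechanism your sketch is missing; note that a linear segment, unlike the running maximum, agrees with $f$ nowhere in the interior of the fold, so it is not covered by that lemma. Continuity in $(f,t)$ then follows from explicit estimates for the hull operator (Lemma~\ref{lem 5}(3),(4)). If you replace your $P_s$ by this construction, your $F_\lambda$ gives a correct alternative ending.
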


Restricting  the homotopy in Theorem~\ref{thm:main2} to $E_{\leq a}$ and $E_{<a}$ respectively, we can obtain the following corollary:
\begin{corollary} \label{cor:E-ls-a}
	For every $a\in [0,+\infty]$,
	$E_{\leq a}$ ($E_{<a}$, respectively) is contractible
	and $E_{\leq a}^{PM}$ ($E_{<a}^{PM}$, respectively)
	is homotopy dense in  $E_{\leq a}$ ($E_{<a}$, respectively).
\end{corollary}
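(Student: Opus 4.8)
The plan is to deduce the corollary directly from Theorem~\ref{thm:main2} by restricting the homotopy $H$ to each of the four level sets and checking that it never leaves them. First I would verify that $H$ carries $E_{\leq a}\times\II$ into $E_{\leq a}$. Fix $f\in E_{\leq a}$, so that $\htop(f)\leq a$. At $t=0$ we have $H_0(f)=f\in E_{\leq a}$ by property~(1); for $t\in(0,1)$ property~(2) gives $\htop(H_t(f))\leq\htop(f)\leq a$, so $H_t(f)\in E_{\leq a}$; and at $t=1$ the map $H_1(f)\equiv 0$ is constant, hence has zero topological entropy, so $H_1(f)\in E_{\leq a}$ because $a\geq 0$. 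Thus $H|_{E_{\leq a}\times\II}$ is a homotopy from $\id_{E_{\leq a}}$ to the constant map with value the zero function, which witnesses that $E_{\leq a}$ is contractible.

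For the homotopy density statement I would observe that the \emph{same} restricted homotopy already takes values in the piecewise monotone subspace at every positive time. For $t\in(0,1)$, property~(2) gives $H_t(f)\in C^{PM}(\II)$, and together with $\htop(H_t(f))\leq a$ this yields $H_t(f)\in E_{\leq a}^{PM}$. At $t=1$ the constant zero map is (weakly) monotone, hence piecewise monotone, and has entropy $0\leq a$, so $H_1(f)\in E_{\leq a}^{PM}$ as well. Therefore $H_t(E_{\leq a})\subseteq E_{\leq a}^{PM}$ for every $t\in(0,1]$ while $H_0=\id_{E_{\leq a}}$, which is precisely the assertion that $E_{\leq a}^{PM}$ is homotopy dense in $E_{\leq a}$.

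The case of $E_{<a}$ is handled identically, the only point requiring care being the endpoint $t=1$. For $f\in E_{<a}$ we get $\htop(H_t(f))\leq\htop(f)<a$ for $t\in(0,1)$, so $H_t(f)\in E_{<a}$, and in fact $H_t(f)\in E_{<a}^{PM}$ by the same piecewise-monotonicity remark. At $t=1$ the zero map has entropy $0$, and since $E_{<a}$ is nonempty only when $a>0$ we indeed have $0<a$, so $H_1(f)\in E_{<a}^{PM}$; hence the restriction of $H$ simultaneously contracts $E_{<a}$ and exhibits $E_{<a}^{PM}$ as homotopy dense. I do not anticipate a genuine obstacle here, because Theorem~\ref{thm:main2} has been designed precisely so that its entropy-nonincreasing, piecewise-monotone-valued, null-terminating homotopy passes to every sublevel set; the only things to double-check are the two boundary instants $t=0$ and $t=1$ and the elementary facts that a constant map has zero entropy and is piecewise monotone.
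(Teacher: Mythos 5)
Your proposal is correct and is exactly the argument the paper intends: the corollary is obtained by restricting the homotopy $H$ of Theorem~\ref{thm:main2} to the sublevel sets, using property (2) on $(0,1)$ and the fact that the constant map $H_1(f)\equiv 0$ has zero entropy and is piecewise monotone at $t=1$. The paper states this in one sentence without writing out the checks; you have simply made them explicit.
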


The paper is organized as follows.
In Section 2, we recall some basic notions which we will use in the paper.
Theorems~\ref{thm:main1} and \ref{thm:main2} are proved
in Sections 3 and 4 respectively.

\section{Preliminaries}

In this section,  we recall some notions and aspects of infinite-dimensional topology and topological entropy which will be used later.

\subsection{Infinite-dimensional topology}
In this subsection,
we  give some concepts and facts on
general topology and infinite-dimensional topology.
For more information, we refer the reader to \cite{Englking-book-1989-b,Chigogidze-2001,van-book-1989,van-book-2001}.

Let $(X,d)$ be a  metric space.
We say that 
\begin{itemize}
\item $X$ is \textbf{nowhere locally compact}
if no non-empty open set  in $X$ is locally compact;
\item $X$ is an \textbf{absolute (neighborhood) retract}
(\textbf{A(N)R}, briefly) if for every metric space $Y$ which contains $X$ as a closed subspace, there exists a continuous map $r:Y\to X$ ($r:U\to X$ from a neighborhood $U$ of $X$) such that $r|_X=\id$;
\item $X$ has
the \textbf{strong discrete approximation property}
(\textbf{SDAP}, briefly) if for every continuous map $\varepsilon:X\to (0,1)$, every compact metric space $K$ and every continuous map $f:K\times \N\to X$, there exists a continuous map $g:K\times\N\to X$ such that
$\{g(K\times\{n\}):n\in\N\}$ is discrete in $X$ and $d(f(k,n),g(k,n))<\varepsilon(f(k,n))$ for every $(k,n)\in K\times \N$.
\end{itemize}

A \textbf{homotopy} on $X$ is a continuous map $H\colon X\times \II\to X$, $(x,t)\mapsto H_t(x)$.
The space $X$ is said to be \textbf{contractible} if there exists a homotopy $H:X\times\II\to  X$ such that $H_0=\id_X$ and $H_1$ is  a constant map.
A subset $A$ of $X$ is called \textbf{homotopy dense} if there
exists a homotopy $H:X\times\II\to X$ such that $H_0=\id_X$ and
$H_t(x)\in A$  for every $x\in X$ and $t\in (0,1]$.

We will need the following important results in infinite-dimensional topology.

\begin{prop}\label{prop:AR-ANR}
	(\cite[Theorem 5.2.15]{van-book-1989})
	A metric space is an AR if and only if it is a contractible ANR. 	
\end{prop}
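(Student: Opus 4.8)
The plan is to establish the two implications separately; the forward direction is essentially formal, while the reverse direction carries the real content. For the forward implication, assume $X$ is an AR. Then $X$ is trivially an ANR, since one may take the retracting neighborhood to be the whole ambient space. To see that $X$ is contractible, I would realize $X$ as a closed subspace of a contractible ambient space and pull a contraction back through a retraction. Concretely, by a classical embedding theorem (Arens--Eells, or Kuratowski--Wojdys\l{}awski) every metric space embeds isometrically as a closed subspace of a convex subset $C$ of a normed linear space; since $X$ is an AR there is a retraction $r\colon C\to X$ with $r|_X=\id$. As $C$ is convex, it is contractible through the straight-line homotopy to a fixed $c_0\in C$, and then
\[
H\colon X\times\II\to X,\qquad H(x,t)=r\bigl((1-t)x+tc_0\bigr),
\]
satisfies $H_0=\id_X$ and $H_1\equiv r(c_0)$, so $X$ is contractible.

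For the reverse implication, assume $X$ is a contractible ANR, fix a contraction $H\colon X\times\II\to X$ with $H_0=\id_X$ and $H_1\equiv x_0$, and let $Y$ be any metric space containing $X$ as a closed subspace. Since $X$ is an ANR there are an open neighborhood $U\supseteq X$ and a retraction $r\colon U\to X$. Using normality of the metric space $Y$, choose an open set $V$ with $X\subseteq V\subseteq\cl V\subseteq U$ together with a Urysohn function $\lambda\colon Y\to\II$ satisfying $\lambda|_X\equiv 0$ and $\lambda|_{Y\sm V}\equiv 1$, and define
\[
R(y)=
\begin{cases}
H\bigl(r(y),\lambda(y)\bigr), & y\in\cl V,\\
x_0, & y\in Y\sm V.
\end{cases}
\]
On the overlap $\cl V\cap(Y\sm V)=\bd V$ we have $\lambda\equiv 1$, so both clauses return $x_0$; hence $R$ is well defined and continuous by the pasting lemma applied to the closed cover $\{\cl V,\,Y\sm V\}$. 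Because $\lambda|_X\equiv 0$ and $H_0=\id_X$, we obtain $R|_X=\id_X$, so $R$ is a retraction of $Y$ onto $X$, and $X$ is an AR.

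I expect the reverse direction to be the main obstacle, and within it the delicate point is the interpolation: the neighborhood retraction $r$ is only available on $U$, so the construction must pass continuously from ``apply $r$'' near $X$ to ``collapse to $x_0$'' away from $X$, which is exactly what the contraction $H$ combined with the Urysohn function $\lambda$ achieves, provided the collar $V$ is nested strictly inside $U$ so that $r$ is defined on all of $\cl V$. In the forward direction the only nonformal ingredient is the existence of a closed embedding into a convex (hence contractible) set, a classical fact about metric spaces rather than anything particular to retracts.
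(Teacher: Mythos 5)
Your proposal is correct. The paper offers no proof of this statement---it is quoted verbatim from van Mill's book---so there is nothing to compare against except the cited source, and your argument is in fact the standard textbook proof of that result. Both halves check out: in the forward direction the Kuratowski--Wojdys\l{}awski (or Arens--Eells) embedding realizes $X$ as a closed subset of a convex set $C$, and composing the straight-line contraction of $C$ with the retraction $r\colon C\to X$ contracts $X$; in the reverse direction the pasting of $H(r(y),\lambda(y))$ on $\cl V$ with the constant $x_0$ on $Y\sm V$ is well defined because $\lambda\equiv 1$ on $\cl V\cap(Y\sm V)$ forces both clauses to equal $x_0$, it is the identity on $X$ because $\lambda|_X\equiv 0$ and $H_0=\id_X$, and $r$ is defined on all of $\cl V$ because the collar $V$ was chosen with $\cl V\subseteq U$ via normality. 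No gaps.
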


\begin{theorem}\label{thm:AN-SDAP-homotopy-dense}
	({\cite[1.2.1 Proposition and Exercise 1.3.4]{Banakh}})
	Let $Y$ be a homotopy dense subspace of $X$.
	If $X$ is an ANR (with SDAP) then $Y$ is also an ANR (with SDAP) .
\end{theorem}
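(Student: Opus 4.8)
The plan is to establish the two assertions in turn. Fix a homotopy $H\colon X\times\II\to X$ witnessing the homotopy density of $Y$, so that $H_0=\id_X$ and $H_t(X)\subseteq Y$ for every $t\in(0,1]$; note that $Y$ is in particular dense in $X$, since $H_t(x)\to x$ as $t\to0$. For the ANR assertion I would use the standard fact (valid in the references cited above) that a metrizable space is an ANR exactly when it is a neighborhood extensor for metrizable spaces. So let $Z$ be a metric space, $A\subseteq Z$ a closed subset, and $f\colon A\to Y$ continuous; I must extend $f$ over a neighborhood of $A$ into $Y$. Viewing $f$ as a map into the ANR $X$, I obtain a neighborhood $W$ of $A$ in $Z$ and a continuous extension $F\colon W\to X$. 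Since $F$ may leave $Y$, I correct it with $H$: put $\phi(z)=\min\{1,\dist(z,A)\}$, which vanishes precisely on $A$, and define $\bar f(z)=H(F(z),\phi(z))$. Then $\bar f$ is continuous; it equals $f$ on $A$ (where $\phi=0$ and $H_0=\id_X$, so $\bar f(z)=F(z)=f(z)\in Y$ because $f(A)\subseteq Y$), and it lands in $Y$ on $W\sm A$ (where $\phi(z)>0$). Hence $\bar f\colon W\to Y$ extends $f$, proving $Y$ is an ANR.

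For SDAP the natural scheme is \emph{perturb in $X$, then push into $Y$}. Given a continuous gauge $\varepsilon\colon Y\to(0,1)$, a compact metric space $K$, and a continuous $f\colon K\times\N\to Y$, I would first invoke SDAP of $X$ to replace $f$ by a map $g_0\colon K\times\N\to X$ whose slice-images $\{g_0(K\times\{n\})\}$ form a discrete family in $X$ and which is a small perturbation of $f$. Discreteness yields a discrete family of open sets $\{W_n\}$ in $X$ with $g_0(K\times\{n\})\subseteq W_n$. I would then push $g_0$ back into $Y$ by setting $g(k,n)=H(g_0(k,n),\sigma_n)$, choosing the times $\sigma_n>0$ small enough (possible slicewise, as each $K\times\{n\}$ is compact and $H_0=\id_X$) that $g(K\times\{n\})\subseteq W_n$ and that the extra displacement stays within the error budget. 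Because $\{W_n\}$ is discrete and $g(K\times\{n\})\subseteq W_n\cap Y$, the family $\{g(K\times\{n\})\}$ is discrete in $X$, hence discrete in the subspace $Y$; and $g$ takes values in $Y$ and is uniformly close to $f$, which is exactly what SDAP of $Y$ requires.

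The delicate point — and what I expect to be the main obstacle — is the coordination of the error budget in the first step. The gauge $\varepsilon$ is defined only on $Y$ and may tend to $0$ as one approaches $X\sm Y$; since the image of $f$ can accumulate on $X\sm Y$, there need not exist any continuous \emph{positive} function on all of $X$ that is dominated by $\varepsilon$ along $f$, so one cannot simply transport $\varepsilon$ to $X$ and feed it into the SDAP of $X$. I would circumvent this by passing to the open-cover formulation of the approximation, which for metric spaces is equivalent to the $\varepsilon$-formulation: from $\varepsilon$ form the open cover $\mathcal U$ of $Y$ by the balls $B_Y(y,\varepsilon(y))$, extend its members to open subsets of $X$, and adjoin, near $X\sm Y$, a refinement fine enough to be subordinate to the already-shrinking extended members, thereby obtaining a genuine open cover $\mathcal V$ of all of $X$ whose trace on $Y$ still refines $\mathcal U$. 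Applying SDAP of $X$ with $\mathcal V$ produces a $g_0$ that is $\mathcal V$-close to $f$, and the subsequent push keeps $g$ within $\mathcal U$ of $f$, that is, within $\varepsilon$. Verifying that $\mathcal V$ can be arranged so that $\mathcal V$-closeness near the accumulation set $X\sm Y$ forces $\varepsilon$-closeness on $Y$ — using paracompactness of $X$ together with the continuity of $\varepsilon$ and of $H$ on each compact slice — is the technical heart of the argument.
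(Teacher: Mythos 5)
This theorem is quoted in the paper from Banakh--Radul--Zarichnyi without proof, so there is no in-paper argument to compare yours against; I can only assess the proposal on its own terms. Your ANR half is correct and is the standard argument: extend $f\colon A\to Y$ over a neighborhood $W$ into the ANR $X$ to get $F\colon W\to X$, then correct by $\bar f(z)=H\bigl(F(z),\min\{1,\dist(z,A)\}\bigr)$, which agrees with $f$ on $A$ and takes values in $Y$ off $A$. That settles the first assertion.

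The SDAP half has a genuine gap, located exactly where you flag ``the technical heart'': the open cover $\mathcal V$ of $X$ whose trace on $Y$ refines $\mathcal U=\{B_Y(y,\varepsilon(y))\}$ does not exist in general, so the scheme ``apply SDAP of $X$ to $\mathcal V$, then push into $Y$'' cannot even be launched. A counterexample is available inside this very paper: take $X=C(\II)$ (homeomorphic to $l_2$ by Proposition~\ref{prop:CI-l2}, hence with SDAP), $Y=E_{>0}$ (homotopy dense in $C(\II)$ by Corollary~\ref{cor:homotopy-dense}), fix any $p\in C(\II)\sm Y$ (say a constant map), and set $\varepsilon(y)=\min\{\tfrac12,\tfrac12 d(y,p)\}$, a continuous map $Y\to(0,1)$. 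Every member $B_Y(y,\varepsilon(y))$ of $\mathcal U$ is disjoint from the ball $B_X(p,\tfrac12 d(y,p))$, whereas any open cover $\mathcal V$ of $X$ has a member $V$ containing some ball $B_X(p,r)$, whose intersection with $Y$ is nonempty (density) and contains points arbitrarily close to $p$; hence $V\cap Y$ lies in no single member of $\mathcal U$. The same example defeats the variant of transporting the gauge: a continuous positive function on all of $X$ dominated by $\varepsilon$ on $Y$ would have to vanish at $p$. The obstruction is intrinsic --- $\varepsilon$ may decay to $0$ along sequences in $Y$ converging to points of $X\sm Y$, and the image of $f$ may accumulate exactly there --- so no single cover of $X$, and no single application of the SDAP of $X$, can control the approximation. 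A correct proof has to exploit that discreteness is required only at points of $Y$ (accumulation of the slices at points of $X\sm Y$ is harmless) and that $\varepsilon\circ f$ has a positive minimum on each compact slice $K\times\{n\}$, treating the slices with slicewise gauges (or regrouping them according to their distance to $X\sm Y$) before pushing into $Y$ with the homotopy. As written, the second assertion is not proved.
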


\begin{theorem} \label{thm:AR-SDAP-l2}
	({\cite[1.1.14 (Characterization Theorem)]{Banakh}})
	A separable topologically complete metric space is
	homeomorphic to $l_2$ 
	if and only if it is an AR with SDAP.
\end{theorem}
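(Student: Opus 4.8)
The plan is to prove the two implications separately, with essentially all of the work residing in the converse. For the forward direction I would simply verify that $l_2$ satisfies each listed property. It is separable and topologically complete as a Banach space; it is an AR because it is convex, and every convex subset of a normed linear space is an AR via the Dugundji extension theorem (it is a retract of any metric space containing it); and it has SDAP because, given $f\colon K\times\N\to l_2$, one can exploit the infinite dimensionality to perturb the restrictions $f(\cdot,n)$ into pairwise almost-orthogonal directions, forcing the images $g(K\times\{n\})$ into a discrete family while staying within the prescribed tolerance $\varepsilon$. It is precisely the infinite-dimensionality that makes the discreteness attainable, which is why no finite-dimensional space has SDAP.

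The substance is the converse: a separable, topologically complete AR $X$ with SDAP is homeomorphic to $l_2$. This is Toruńczyk's characterization of Hilbert-space topology, and I would organize its proof around two pillars. First, a \emph{manifold recognition} step: show that $X$, being a separable completely metrizable ANR (AR implies ANR by Proposition~\ref{prop:AR-ANR}) equipped with the discrete approximation data supplied by SDAP, is in fact an $l_2$-manifold. Second, a \emph{homotopy classification} step: invoke the fact that two $l_2$-manifolds are homeomorphic as soon as they are homotopy equivalent. Since $X$ is an AR it is contractible, hence homotopy equivalent to a point and therefore to $l_2$ itself (which is likewise contractible), so the classification yields $X\cong l_2$.

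The technical heart, and the step I expect to be the main obstacle, is the manifold-recognition argument, where SDAP is converted into genuine charts. The key tool is Z-set theory: a closed $A\subseteq X$ is a Z-set if the identity can be approximated arbitrarily closely by maps whose images avoid $A$, and SDAP is exactly what is needed to push images of compacta (and, built up from compacta, of all of $X$) into Z-sets and into general position. With this in place one runs a controlled back-and-forth construction, an infinite swindle, approximating the identity by a sequence of maps with discrete, Z-set images while enforcing a summable metric estimate at each stage, so that the infinite composition converges uniformly to a homeomorphism identifying a neighbourhood in $X$ with an open subset of $l_2$. Making this convergence rigorous, namely the Z-set unknotting together with the bookkeeping of the estimates, is the delicate part; the subsequent gluing of charts and the homotopy classification are comparatively formal. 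Because this whole chain is the content of a single deep theorem, in the present paper I would, as is standard, simply cite it in the form stated.
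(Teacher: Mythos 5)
The paper offers no proof of this statement at all: it is quoted verbatim from \cite{Banakh} as Toruńczyk's characterization of $l_2$, and your outline—forward direction by direct verification, converse via Z-set/discrete-approximation manifold recognition followed by the homotopy classification of $l_2$-manifolds—is an accurate description of the standard argument behind that citation. Since you ultimately (and correctly) conclude that the theorem should simply be cited, your treatment coincides with the paper's.
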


\begin{theorem} \label{thm:homeo-l2}
	({\cite[5.5.2 Corollary]{Banakh}})
	A convex subspace $X$ of a separable Banach space
	is homeomorphic to $l_2$
	if and only if $X$ is
	topologically complete and nowhere locally compact.
\end{theorem}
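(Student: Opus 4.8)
The plan is to verify the four hypotheses of the Characterization Theorem (Theorem~\ref{thm:AR-SDAP-l2}): that $X$ is separable, topologically complete, an AR, and has the SDAP. Two of these are immediate. Since $X$ sits inside a separable Banach space $Y$, it is a subspace of a separable metric space and hence separable, and topological completeness is assumed outright. So the real content is to show that $X$ is an AR and that it has the SDAP, the latter being where the nowhere local compactness hypothesis must be spent.

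For the AR property I would argue from convexity alone, independently of completeness. Fixing any $x_0\in X$, the straight-line homotopy $H(x,t)=(1-t)x+tx_0$ takes values in $X$ by convexity and contracts $X$ to $x_0$, so $X$ is contractible. Moreover, convex subsets of normed linear spaces are ANRs; this is the standard consequence of Dugundji's extension theorem, which extends any map of a closed subset of a metric space into a convex set while keeping the values inside the convex hull of the image. Granting this, Proposition~\ref{prop:AR-ANR} upgrades ``contractible ANR'' to AR, so $X$ is an AR.

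The heart of the matter is the SDAP. Here I would first translate the hypothesis into a usable form: because $X$ is topologically complete, a subset of $X$ is compact exactly when it is closed and totally bounded, so ``nowhere locally compact'' means that no ball $B_X(x,\delta)$ is totally bounded. Consequently each such ball contains, for a suitable $\eta=\eta(x,\delta)>0$, an infinite $2\eta$-separated set; convexity then lets me move from a given point $x$ toward any target $a\in X$ along the admissible segment $(1-s)x+sa$, perturbing $x$ by the controllable vector $s(a-x)$ without leaving $X$. Given the SDAP data---a map $\varepsilon\colon X\to(0,1)$, a compact space $K$, and a continuous $f\colon K\times\N\to X$ with sheets $f_n:=f(\cdot,n)$---the goal is to perturb each $f_n$ to some $g_n$, within $\varepsilon$, so that the family $\{g_n(K):n\in\N\}$ is discrete in $X$. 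The scheme is, for each $n$, to select auxiliary targets drawn from a $2\eta$-separated set near the compact image $f_n(K)$ and to push $f_n$ a small convex amount toward them, so that distinct sheets are driven into mutually separated regions; convexity guarantees $g_n(K)\subseteq X$, while the separation is meant to yield local finiteness.

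The main obstacle is precisely the simultaneous realization of three competing demands in this construction: the perturbation must stay below the \emph{variable} gauge $\varepsilon$ (which may force the available room $\eta$ to shrink from sheet to sheet and from point to point), it must depend continuously on the parameter $k\in K$, and the resulting family $\{g_n(K)\}$ must be genuinely discrete---locally finite \emph{and} closed---rather than merely pairwise disjoint. Managing this calls for an inductive, partition-of-unity bookkeeping that exploits the non-total-boundedness of balls at \emph{every} scale (this is where nowhere local compactness is used in full), with convexity invoked throughout only to keep the perturbed sheets inside $X$. Once the SDAP is secured, all four hypotheses of Theorem~\ref{thm:AR-SDAP-l2} hold, and that theorem delivers $X\cong l_2$, completing the proof.
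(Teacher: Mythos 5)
This statement is not proved in the paper at all: it is quoted verbatim from Banakh--Radul--Zarichnyi \cite[5.5.2 Corollary]{Banakh}, so there is no internal proof to compare against, and your attempt has to be judged on its own. Your reduction to Theorem~\ref{thm:AR-SDAP-l2} is the right frame, and the AR part is sound (separability is inherited, the straight-line homotopy uses convexity, and Dugundji's extension theorem does make convex subsets of normed spaces ANRs --- in fact ARs directly). But the SDAP step, which you correctly identify as the heart of the matter, contains a genuine error before it even reaches the acknowledged ``main obstacle''. You claim that topological completeness makes compactness in $X$ equivalent to ``closed and totally bounded'', and hence that nowhere local compactness forbids totally bounded balls. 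This conflates the norm metric with \emph{a} complete metric: $X$ is only assumed topologically complete, i.e.\ a $G_\delta$ in its completion, and need not be closed in the ambient Banach space, so the norm metric on $X$ is in general not complete and closed-in-$X$ plus norm-totally-bounded does not give compactness. Concretely, take Keller's cube realized as $Q=\{x\in l_2: 0\le x_n\le 2^{-n}\}$ and let $X=\{x\in l_2: 0<x_n<2^{-n}\}$ be its pseudo-interior. This $X$ is convex, a $G_\delta$ in the compact set $Q$ (hence topologically complete), and nowhere locally compact --- indeed $X$ is homeomorphic to $(0,1)^{\N}\cong\R^{\N}\cong l_2$, consistent with the theorem --- yet \emph{every} subset of $X$ is norm-totally-bounded, so no ball contains an infinite $2\eta$-separated set and your perturbation scheme cannot start. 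Nowhere local compactness must be exploited topologically, not through metric separation in the ambient norm.

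Beyond this error, the SDAP verification is in any case only a plan, not a proof: the ``inductive, partition-of-unity bookkeeping'' that would simultaneously respect the variable gauge $\varepsilon$, preserve continuity in $k\in K$, and force the family $\{g_n(K)\}$ to be discrete (locally finite and closed, not merely pairwise separated) is exactly the nontrivial content of Banakh's result, and it is left unexecuted. Finally, the statement is an equivalence, and you never address the ``only if'' direction; it is short (topological completeness and nowhere local compactness are topological invariants and hold for $l_2$), but it should be recorded. As it stands, the attempt establishes the AR half honestly and leaves the characterization's remaining hypothesis resting on a false metric translation of the hypothesis plus an unproved construction.
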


The following result must be ``folklore'',
but we can not find a proper reference and therefore we provide a proof
for the completeness.

\begin{prop}\label{prop:CI-l2}
	The function space  $C(\II)$ is homeomorphic to $l_2$.
\end{prop}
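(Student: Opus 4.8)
The plan is to realize $C(\II)$ as a convex subset of a separable Banach space and invoke Theorem~\ref{thm:homeo-l2}, which reduces the whole statement to checking two properties: topological completeness and nowhere local compactness. Concretely, I would regard $C(\II)=C([0,1],[0,1])$ as a subset of the Banach space $B=C([0,1],\R)$ of all real-valued continuous maps on $[0,1]$ equipped with the supremum norm. This $B$ is separable (for instance the polynomials with rational coefficients are dense by the Weierstrass approximation theorem), and $C(\II)$ is clearly convex, since a convex combination $t f+(1-t)g$ of two maps into $[0,1]$ again takes values in $[0,1]$.

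Next I would verify topological completeness. The set $C(\II)$ is closed in $B$: if $f_n\to f$ uniformly with each $f_n$ taking values in $[0,1]$, then $f$ takes values in $[0,1]$ as well. Since $B$ is complete and a closed subspace of a complete metric space is itself complete, $C(\II)$ is completely metrizable, hence topologically complete.

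The main work, and the step I expect to be the real obstacle, is nowhere local compactness, because the constraint that maps take values in the bounded interval $[0,1]$ prevents $C(\II)$ from being linear and forces the perturbations to be chosen with care. It suffices to show that no $f\in C(\II)$ has a compact neighborhood. Fix $f$ and $\varepsilon\in(0,1)$; I would produce infinitely many maps in the ball $B(f,\varepsilon)$ that are pairwise at distance exactly $\varepsilon/4$. To this end, choose infinitely many pairwise disjoint nondegenerate closed subintervals $J_1,J_2,\dotsc$ of $[0,1]$, small enough (using uniform continuity of $f$) that the oscillation of $f$ on each $J_n$ is less than $\varepsilon/8$. On each $J_n$ I would add to $f$ a continuous ``tent'' perturbation $\phi_n$ supported in $J_n$ of peak magnitude $\varepsilon/4$, choosing the sign of $\phi_n$ to point upward when the value of $f$ at the midpoint of $J_n$ is at most $1/2$ and downward otherwise. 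The oscillation bound (together with $\varepsilon<1$) guarantees that $f+\phi_n$ still takes values in $[0,1]$, so $g_n:=f+\phi_n\in C(\II)$. Then $\|g_n-f\|_\infty=\varepsilon/4<\varepsilon$, and since the supports $J_n$ are disjoint there is no cancellation, whence $\|g_n-g_m\|_\infty=\varepsilon/4$ for $n\ne m$. This $(\varepsilon/4)$-separated sequence lies inside $B(f,\varepsilon)$ and has no convergent subsequence, so no neighborhood of $f$ containing $B(f,\varepsilon)$ can have compact closure; hence $f$ admits no compact neighborhood.

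Having established that $C(\II)$ is a topologically complete, nowhere locally compact, convex subset of the separable Banach space $B$, I would conclude by a direct application of Theorem~\ref{thm:homeo-l2} that $C(\II)$ is homeomorphic to $l_2$. (One could instead route through Theorem~\ref{thm:AR-SDAP-l2} by checking the AR and SDAP conditions separately, but the convex-subset criterion of Theorem~\ref{thm:homeo-l2} is far more economical here.)
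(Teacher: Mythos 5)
Your proposal is correct and follows exactly the route of the paper's own proof: view $C(\II)$ as a closed convex subset of the separable Banach space $C(\II,\R)$ and apply Theorem~\ref{thm:homeo-l2} after checking topological completeness and nowhere local compactness. The only difference is that you spell out the disjoint-tent-perturbation argument for nowhere local compactness (correctly), whereas the paper dismisses this step as ``not hard to verify.''
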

\begin{proof}
	Let $C(\II,\R)$ be the collection of all continuous maps from $\II$ to $\R$ with the standard linear structure and the supremum norm.
	Then $C(\II,\R)$ is a separable Banach space.
	The space $C(\II)$ is a closed and convex subspace of $C(\II,\R)$.
	It is not hard to verify that $C(\II)$ is nowhere locally compact.
	It follows from Theorem~\ref{thm:homeo-l2} that $C(\II)$ is homeomorphic to $l_2$. 
\end{proof}

Combining the above results, we have the following useful criterion when a subspace of $C(\II)$ is homeomorphic to $l_2$.
\begin{corollary}\label{cor:CI-subspace}
	A homotopy dense subspace $A$ of $C(\II)$ is homeomorphic to $l_2$
	if and only if it is topologically complete and contractible.
\end{corollary}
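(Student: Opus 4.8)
The plan is to deduce the statement by feeding the subspace $A$ through the chain of characterizations collected above, so the whole argument amounts to bookkeeping of which of the properties AR, ANR, SDAP, separability and topological completeness are available at each stage. The one substantive preliminary observation is that $C(\II)$ itself is an AR with SDAP: since Proposition~\ref{prop:CI-l2} gives $C(\II)\cong l_2$, and $l_2$ is a separable topologically complete metric space that is (trivially) homeomorphic to $l_2$, the ``only if'' half of Theorem~\ref{thm:AR-SDAP-l2} forces $l_2$, and hence $C(\II)$, to be an AR (in particular an ANR) with SDAP.

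For the easy (``only if'') direction, I would simply note that $l_2$ is topologically complete, being a Banach space, and contractible, being convex, and that both properties are topological invariants; hence any space homeomorphic to $l_2$---in particular $A$, should $A\cong l_2$---inherits them.

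The content lies in the converse. Assume $A$ is homotopy dense in $C(\II)$, topologically complete and contractible. Since $C(\II)$ is an ANR with SDAP and $A$ is homotopy dense in it, Theorem~\ref{thm:AN-SDAP-homotopy-dense} yields that $A$ is an ANR with SDAP. Combining the hypothesis that $A$ is contractible with Proposition~\ref{prop:AR-ANR} then upgrades ``ANR'' to ``AR''. Finally $A$ is separable, being a subspace of the separable space $C(\II)$, and topologically complete by assumption, so $A$ is a separable topologically complete AR with SDAP; the ``if'' half of Theorem~\ref{thm:AR-SDAP-l2} then gives $A\cong l_2$, as desired.

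The argument has no genuine obstacle beyond assembling the cited results in the correct order; the only points demanding a little care are verifying the hypotheses of Theorem~\ref{thm:AR-SDAP-l2} for $A$ (separability, inherited from $C(\II)$, and topological completeness, taken from the hypothesis) rather than assuming them, and recording at the outset that $C(\II)\cong l_2$ supplies simultaneously the ANR structure and the SDAP that the inheritance theorem~\ref{thm:AN-SDAP-homotopy-dense} requires.
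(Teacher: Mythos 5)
Your proposal is correct and follows essentially the same route as the paper: deduce that $C(\II)$ is an ANR with SDAP from Proposition~\ref{prop:CI-l2} and Theorem~\ref{thm:AR-SDAP-l2}, transfer these properties to $A$ via homotopy density (Theorem~\ref{thm:AN-SDAP-homotopy-dense}), upgrade ANR to AR using contractibility (Proposition~\ref{prop:AR-ANR}), and conclude with Theorem~\ref{thm:AR-SDAP-l2}. Your explicit check of separability and of the easy direction is slightly more detailed than the paper's, but the argument is the same.
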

\begin{proof}
	The necessity is clear and we only need to prove the sufficiency.  
	By Proposition~\ref{prop:CI-l2}, $C(\II)$ is homeomorphic to $l_2$.
	So by Theorem~\ref{thm:AR-SDAP-l2}, $C(\II)$ is an ANR with SDAP.
	Since $A$ is homotopy dense in $C(\II)$, it follows from Theorem~\ref{thm:AN-SDAP-homotopy-dense} that $A$ is also an ANR with SDAP.
 By the assumption we have $A$ is contractible, then by Proposition~\ref{prop:AR-ANR},
	$A$ is an AR. Finally by Theorem~\ref{thm:AR-SDAP-l2} again,
	$A$ is homeomorphic to $l_2$. 
\end{proof}

\subsection{Topological entropy}

Let $X$ be a compact metric space.
Denote by $Cov(X)$ the family of all open covers of $X$.
For $\alpha, \beta\in Cov(X)$ and $f\in C(X)$,
let
$$N(\alpha)=\min\Bigl\{n\in\mathbb{N}\colon
\mbox{there exist }U_1,U_2,\cdots, U_n\in\alpha\ \mbox{such that}\ \bigcup_{i=1}^nU_i=X\Big\};$$
$$\alpha\vee\beta=\{U\cap V:U\in\alpha, V\in\beta\},\
f^{-1}(\alpha)=\{f^{-1}(U):U\in\alpha\}$$
and
$$\htop(f,\alpha)= \lim\limits_{n\to\infty}
\frac{\log N(\alpha\vee f^{-1}(\alpha)\vee\cdots\vee f^{-n+1}(\alpha))}{n}.$$
The  \textbf{topological entropy} of a continuous map $f:X\to X$ is defined as 
$$\htop(f)=\sup\{\htop(f,\alpha):\alpha\in Cov(X)\}.$$

Let $f\in C(\II)$. A family $\{J_1,J_2,\cdots,J_n\}$ of non-degenerate  closed intervals  is called an \textbf{$n$-horseshoe} if
\begin{enumerate}
	\item $\Int(J_i)\cap \Int(J_j)=\emptyset$ for all $1\leq i<j\leq n $, where $\Int(J_i)$ is the interior of $J_i$ in $\II$;
	\item  $J_i\subset f(J_j)$ for all $1\leq i,j\leq n $.
\end{enumerate}
The following result can be easily obtained, see e.g.\ \cite[Proposition VIII.8]{Block-book-1992}.

\begin{lemma}\label{lem:horseshoe}
	If $f\in C(\II)$ has an $n$-horseshoe, then $h(f)\geq \log n.$
\end{lemma}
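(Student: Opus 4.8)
The plan is to bound $\htop(f)$ from below by exhibiting, for every $m\in\N$, a set of roughly $n^m$ points whose length-$m$ orbit segments are pairwise well separated, and then to invoke Bowen's description of topological entropy through $(m,\varepsilon)$-separated sets, which coincides with the open-cover definition recalled above on a compact metric space. The only dynamical input needed is the elementary covering lemma for interval maps: if $I$ and $J$ are closed intervals with $I\subseteq f(J)$, then there is a closed subinterval $K\subseteq J$ with $f(K)=I$ (take $K$ to be a minimal closed subinterval of $J$ whose image is $I$; it exists because $f(J)$ is an interval containing $I$).

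First I would realize all finite itineraries in the horseshoe. Fix a word $w=w_0w_1\cdots w_{m-1}\in\{1,\dots,n\}^m$ and build intervals backwards: set $L_{m-1}=J_{w_{m-1}}$, and, having produced $L_{k+1}\subseteq J_{w_{k+1}}$, use $L_{k+1}\subseteq J_{w_{k+1}}\subseteq f(J_{w_k})$ together with the covering lemma to obtain a closed interval $L_k\subseteq J_{w_k}$ with $f(L_k)=L_{k+1}$. Writing $I_w=L_0$, one gets a nonempty closed interval $I_w\subseteq J_{w_0}$ with $f^k(I_w)\subseteq J_{w_k}$ for every $0\le k\le m-1$. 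Choosing one point $x_w\in I_w$ for each of the $n^m$ words yields points whose orbits follow the prescribed itineraries, and two words differing in the $k$-th coordinate produce points whose $k$-th iterates lie, respectively, in two of the intervals.

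The main obstacle is separation at shared endpoints. Property (1) of a horseshoe only forces the interiors $\Int(J_i)$ to be disjoint, so two of the $J_i$ may abut, and then $f^k(x_w)$ and $f^k(x_{w'})$ could be arbitrarily close even though they lie in distinct intervals; this is the single point that needs care. I would remove it by first passing to a slightly shrunken sub-horseshoe. Replacing each $J_i=[a_i,b_i]$ by a compactly contained closed subinterval $J_i^\ast\subseteq\Int(J_i)$ (say $J_i^\ast=[a_i+\eta,b_i-\eta]$ for small $\eta>0$), the family $\{J_i^\ast\}$ has pairwise positive distance $\varepsilon_0:=\min_{i\ne j}\dist(J_i^\ast,J_j^\ast)>0$, since the $\Int(J_i)$ are disjoint. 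Using uniform continuity of $f$ one checks that, for $\eta$ small enough, each image $f(J_j^\ast)$ still contains every $J_i^\ast$ (the extreme values of $f$ on $J_j$ are approximated arbitrarily well on $J_j^\ast$), so $\{J_1^\ast,\dots,J_n^\ast\}$ is again an $n$-horseshoe. Running the construction above for this separated horseshoe produces points $x_w$ with $f^k(x_w)\in J_{w_k}^\ast$ for all $k$, and if $w\ne w'$ first differ at coordinate $k$ then $|f^k(x_w)-f^k(x_{w'})|\ge\varepsilon_0$. Hence $\{x_w:w\in\{1,\dots,n\}^m\}$ is an $(m,\varepsilon_0)$-separated set of cardinality $n^m$, so the separated-set growth rate, and therefore $\htop(f)$, is at least $\lim_{m\to\infty}\frac1m\log n^m=\log n$.
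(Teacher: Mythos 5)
The paper itself offers no proof of this lemma (it simply cites \cite[Proposition VIII.8]{Block-book-1992}), so your argument stands on its own. Your overall strategy is the standard one: the covering lemma, the backward construction of the itinerary intervals $I_w$, the reduction to $(m,\varepsilon)$-separated sets, and the identification of the shared-endpoint problem are all correct. The gap is in your resolution of that problem. You claim that for small $\eta$ the uniformly shrunken intervals $J_i^\ast=[a_i+\eta,b_i-\eta]$ again form a horseshoe ``by uniform continuity''. This is false in general: uniform continuity only gives $\max f|_{J_j^\ast}\ge \max f|_{J_j}-\omega(\eta)$, where $\omega$ is the modulus of continuity of $f$, whereas you need $\max f|_{J_j^\ast}\ge b_i-\eta$; when $\max f|_{J_j}$ equals $\max_i b_i$ and is attained only at an endpoint of $J_j$ that the shrinking removes, the loss $\omega(\eta)$ can dominate the gain $\eta$. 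Concretely, let $J_1=[0,\tfrac12]$, $J_2=[\tfrac12,1]$ and let $f\equiv 0$ on $[0,\tfrac14]\cup[\tfrac34,1]$, $f(x)=1-2\sqrt{\tfrac12-x}$ on $[\tfrac14,\tfrac12]$ and $f(x)=1-2\sqrt{x-\tfrac12}$ on $[\tfrac12,\tfrac34]$. This is a $2$-horseshoe, but $\max f|_{[\eta,\frac12-\eta]}=1-2\sqrt{\eta}<1-\eta=\max J_2^\ast$ for every $\eta\in(0,1)$, so $f(J_1^\ast)\not\supseteq J_2^\ast$ no matter how small $\eta$ is. So the shrunken family need not be a horseshoe, and the separation step does not go through as written.

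The step is repairable, but it needs a different idea rather than a smaller $\eta$. One standard route: since the $J_i$ have pairwise disjoint interiors they are linearly ordered along $\II$, so selecting every other one yields at least $\lceil n/2\rceil$ pairwise \emph{disjoint} compact intervals which still satisfy the covering condition and now have pairwise positive distance; your separated-set argument then gives $\htop(f)\ge\log\lceil n/2\rceil$. The lost factor of $2$ is recovered by a power trick: the intervals $I_w$ with $|w|=k$, constructed so that the last step of the backward construction sends $I_w$ onto the convex hull of $\bigcup_i J_i$ under $f^k$, form an $n^k$-horseshoe for $f^k$ (the strong form of the covering lemma, with $f(\partial K)=\partial I$, guarantees their interiors are pairwise disjoint), whence $k\,\htop(f)=\htop(f^k)\ge k\log n-\log 2$ for every $k$, and letting $k\to\infty$ gives $\htop(f)\ge\log n$. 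Alternatively one can run the count with the open-cover definition of entropy, where shared endpoints only affect the count by a bounded multiplicative factor. Either way, an additional idea beyond uniform continuity is required.
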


The following result was first proved by Misiurewicz,
see e.g.~\cite[Proposition VIII.30]{Block-book-1992}.

\begin{theorem}\label{thm:entropy-LSC}
	The entropy function $\htop\colon C(\II)\to [0,+\infty]$,
	$f\mapsto \htop(f)$ is lower-semicontinuous.
\end{theorem}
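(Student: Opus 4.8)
The plan is to prove lower-semicontinuity directly from its definition: fixing $f_0\in C(\II)$, I must show that for every real $c<\htop(f_0)$ there is a $\delta>0$ with $\htop(f)>c$ whenever $d(f,f_0)<\delta$. When $\htop(f_0)=0$ this is vacuous (no such $c\geq 0$ exists, and every $c<0$ is handled by the fact that $\htop\geq 0$ always), so I may assume $\htop(f_0)>0$ and, enlarging $c$ if necessary, that $0\le c<\htop(f_0)$. The entire difficulty is then to propagate a strict lower bound on the entropy of $f_0$ to all nearby maps.

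The mechanism I would use is the horseshoe, together with the classical fact that for interval maps the entropy is recovered from below by horseshoes of iterates. Concretely, I would invoke the Misiurewicz characterization (see \cite{Block-book-1992}) that $\htop(f_0)$ equals the growth rate $\lim_{k\to\infty}\tfrac1k\log H_k(f_0)$, where $H_k(f_0)$ is the largest $m$ for which $f_0^{\,k}$ carries an $m$-horseshoe. Since $c<\htop(f_0)$, this furnishes an exponent $k\ge1$ and an integer $n\ge2$ such that $f_0^{\,k}$ possesses an $n$-horseshoe $\{J_1,\dots,J_n\}$ with $\tfrac1k\log n>c$. Passing to iterates is essential here: a single horseshoe of $f_0$ itself would not suffice, since $\htop(f_0)$ may be a proper fraction of $\log 2$ and then $f_0$ carries no $2$-horseshoe at all.

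Next I would make the horseshoe robust, upgrading the covering condition $J_i\subseteq f_0^{\,k}(J_j)$ to strict covering, i.e.\ each $J_i$ lying in the interior of the interval $f_0^{\,k}(J_j)$ for all $i,j$. Granting this, strictness is an open condition: since $f\mapsto f^{\,k}$ is continuous for the supremum metric (by uniform continuity and induction on $k$), a sufficiently small $\delta$ guarantees that $f^{\,k}(J_j)$ stays within Hausdorff distance less than the minimal overshoot of the $f_0^{\,k}(J_j)$, whence $J_i\subseteq f^{\,k}(J_j)$ persists for all $i,j$ and all $f$ with $d(f,f_0)<\delta$; disjointness of the interiors $\Int(J_i)$ is unaffected. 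Thus each such $f^{\,k}$ still has an $n$-horseshoe, Lemma~\ref{lem:horseshoe} applied to $f^{\,k}$ gives $\htop(f^{\,k})\ge\log n$, and the standard identity $\htop(f^{\,k})=k\,\htop(f)$ yields $\htop(f)\ge\tfrac1k\log n>c$, as required.

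I expect the only genuine obstacle to be the robustness upgrade, namely producing a horseshoe with strict (interior) covering. The inclusions $J_i\subseteq f_0^{\,k}(J_j)$ can be tight at the extreme endpoints, and one cannot naively shrink the offending intervals, because the covering relation is circular: shrinking $J_i$ also shrinks $f_0^{\,k}(J_i)$ and may destroy the inclusions that $J_i$ is responsible for. I would handle this either by arranging strictness already at the level of the approximation theorem (the horseshoes produced by Misiurewicz's argument arise from subintervals mapped properly across one another and can be taken with overshoot), or by a separate lemma showing that any $n$-horseshoe can be replaced by one with strict covering at the cost of a marginally smaller growth rate, for which the strict inequality $\tfrac1k\log n>c$ leaves room.
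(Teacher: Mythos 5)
Your proof is correct and is essentially the argument the paper itself relies on: the paper gives no proof of Theorem~\ref{thm:entropy-LSC}, citing it as Misiurewicz's result via \cite[Proposition VIII.30]{Block-book-1992}, and that proof is precisely your route --- the horseshoe approximation of $\htop$ through iterates $f^k$, the upgrade to a strict horseshoe (e.g.\ by discarding the two extreme intervals of a slightly larger one, which costs only a marginal drop in $\frac{1}{k}\log n$), and the persistence of strict covering under small perturbations in the supremum metric together with $\htop(f^k)=k\,\htop(f)$. The one obstacle you flag, the strictness upgrade, is exactly the point handled by a separate lemma in the cited source, so there is no gap.
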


\begin{corollary}\label{cor:Ea-open}
	For every $a\in [0,+\infty)$, $E_{>a}$ is open and $E_{\geq a}$ is a $G_\delta$-set in $C(\II)$. 
\end{corollary}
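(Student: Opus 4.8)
The plan is to deduce everything directly from Misiurewicz's lower-semicontinuity result, Theorem~\ref{thm:entropy-LSC}, so essentially no new dynamical input is needed; the argument is purely point-set topological. Recall that a function $g\colon X\to[0,+\infty]$ is lower-semicontinuous precisely when, for every real $c$, the superlevel set $\{x\in X:g(x)>c\}$ is open in $X$ (equivalently, every sublevel set $\{x:g(x)\le c\}$ is closed). Since Theorem~\ref{thm:entropy-LSC} tells us $\htop\colon C(\II)\to[0,+\infty]$ is lower-semicontinuous, applying this characterization with $c=a$ gives immediately that
\[
E_{>a}=\{f\in C(\II):\htop(f)>a\}
\]
is open in $C(\II)$. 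This settles the first assertion with no further work.

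For the second assertion I would express $E_{\ge a}$ as a countable intersection of open sets. The key identity is
\[
E_{\ge a}=\bigcap_{n=1}^{\infty} E_{>a-1/n},
\]
which I would verify by a short double inclusion: if $\htop(f)\ge a$ then $\htop(f)>a-1/n$ for every $n$, giving $\subseteq$; conversely, if $\htop(f)>a-1/n$ for all $n$, then letting $n\to\infty$ forces $\htop(f)\ge a$, giving $\supseteq$. Each factor $E_{>a-1/n}$ is open: when $a-1/n\ge 0$ this is again lower-semicontinuity, and when $a-1/n<0$ the set is simply all of $C(\II)$ (because $\htop\ge 0$ always), which is trivially open. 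A countable intersection of open sets is by definition a $G_\delta$-set, so $E_{\ge a}$ is a $G_\delta$-set in $C(\II)$, completing the proof.

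There is essentially no substantive obstacle here; the only points requiring the mild care indicated above are the elementary edge case $a-1/n<0$ arising when $a$ is small (in particular $a=0$), and the passage to the limit in the inclusion $\supseteq$, which uses that a strict inequality holding for all $n$ degrades to a non-strict inequality in the limit. Both are routine, and the corollary follows at once from Theorem~\ref{thm:entropy-LSC}.
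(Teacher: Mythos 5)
Your proof is correct and is exactly the standard argument the paper intends (the corollary is stated without proof, as an immediate consequence of Theorem~\ref{thm:entropy-LSC}): openness of $E_{>a}$ is the definition of lower semicontinuity, and writing $E_{\geq a}=\bigcap_{n}E_{>a-1/n}$ exhibits it as a $G_\delta$-set. No issues.
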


The convexity of $C(\II)$ in the Banach space $C(\II,\R)$ plays
a key role in the proof of Proposition~\ref{prop:CI-l2}.
The following examples show that neither $E_{\leq a}$ nor $E_{>a}$ is  convex in  $C(\II,\R)$.

\begin{example}
	Note that for every $f\in C(\II)$,
	if $f(\frac{1}{2}-x)=f(\frac{1}{2}+x)$ for all $x\in [0,\frac{1}{2}]$.
	then $f$ and $1-f$ are topologically conjugate and thus $\htop(1-f)=\htop(f)$.
	But $\htop(\frac{1}{2}f+\frac{1}{2}(1-f))=\htop(\frac{1}{2})=0$.
	It follows that
	$E_{>a}$ is not convex for any $a\in [0,+\infty)$.
\end{example}

\begin{example}
	It is well-known that for every $f\in C(\II)$,
	$\htop(f)=0$ if and only if all periods  of $f$ are of the form $2^n$
	(see e.g.\ Proposition~VIII.34 and Theorem II.14   in \cite{Block-book-1992}).
	Let $f$ and $g$ be the broken line maps through
	the points $(0,1), (\frac{1}{4},0),\ (1,0)$ and
	the points  $(0,\frac{1}{2}),\ (\frac{1}{4},0),\ (\frac{1}{2},0),\ (\frac{3}{4}, \frac{1}{2}),\ (1,\frac{1}{2})$, respectively.
	Then it is not hard to verify
	that   $n$ is a period for $f$ or $g$ if and only if $n=1$ or $2$.
	It follows that $\htop(f)=\htop(g)=0$.
	For the convex combination
	$\varphi=\frac{1}{2}f+\frac{1}{2}g$,
	we have $\varphi(0)=\frac{3}{4}$,  $\varphi(\frac{3}{4})=\frac{1}{4}$ and $\varphi(\frac{1}{4})=0 $.
	It follows that $0$ is a periodic point with period  $3$ for $\varphi$, which implies $\htop(\varphi)>0$.
	This shows that $E_{\leq 0}$ is not convex.
\end{example}

\section{Proof Theorem~\ref{thm:main1}}
In this section, we will prove Theorem~\ref{thm:main1}.
At first, we introduce the box maps defined in \cite{Kolyada-2151}. Define a subset $\Lambda$ of $\mathbb{R}^5$ as follows
$$\Lambda=\{(a_l,a_r,a_b,a_t,a_s)\in\mathbb{R}^5:a_b<a_t,a_l,a_r\in [a_b,a_t], a_s\geq 20\}.$$
For every non-degenerate closed interval $K=[a_0,a_1]$ and $\lambda=(a_l,a_r,a_b,a_t,a_s)\in\Lambda$, the authors in \cite{Kolyada-2151} defined a continuous surjection
$\xi_\lambda:K\to [a_b,a_t]$, which was called a \textbf{box map}, such that  $\xi_\lambda$ is piecewise linear with constant slope $\frac{a_s(a_t-a_b)}{a_1-a_0}$, $\xi_\lambda(a_0)=a_l$ and $\xi_\lambda(a_1)=a_r$.
We make this construction both from left and right,
$\xi_\lambda$ is increasing on the leftmost lap unless $a_l=a_t$
and decreasing on the rightmost one unless $a_r=a_t$.
We choose the \textbf{meeting point $m$} to be on the fifth decreasing lap from the left (see Figure~\ref{fig:box-map}
for example).
If the left and right graphs coincide, then there is no
well-defined meeting point, but the graph of $\xi_\lambda$ is clear.

\begin{figure}[htbp!]
	\centering
	\includegraphics[scale=0.3]{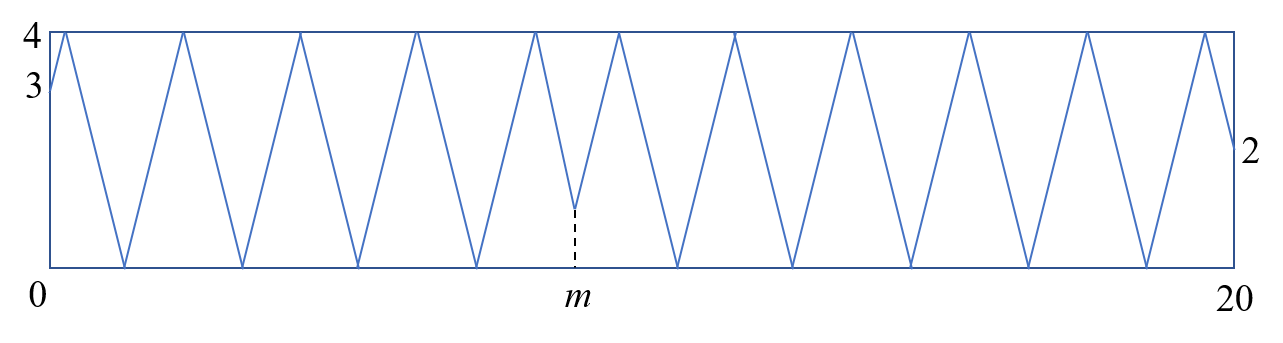}
	\caption{$K=[0,20]$, $a_l=3$, $a_r=2$, $a_b=0$, $a_t=4$, $a_{s}=20$}
	\label{fig:box-map}
\end{figure}

\begin{remark}\label{rem:box-map-entropy}
	Let $\xi_\lambda$ be a box map on $K$.
	If $a_b=a_0$ and $a_t=a_1$, then there exist 
	closed subintervals $J_1$, $J_2$, $\dotsc$, $J_{[a_s-4]}$ of $K$ with disjoint interiors such that
	$f (J_j)=K$ for $j=1,2,\dotsc, [a_s-4]$, 
	where $[x]$ is the greatest integer less than or equal to  $x$.
	Hence, $J_1$, $J_2$, $\dotsc$, $J_{[a_s-4]}$
	form an $[a_s-4]$-horseshoe of $\xi_\lambda$.
	By Lemma~\ref{lem:horseshoe}, $\htop(\xi_\lambda)\geq \log([ a_s-4])$.
\end{remark}

Following the idea in \cite{Kolyada-2151},
for every $\alpha\ge 20$ we first construct a homotopy 
$\widetilde{H}^{\alpha}:C(\II)\times\II\to C(\II)$ as follows.
Fix a function $f\in C(\II)$.
First let $\widetilde{H}^{\alpha}_0(f)=f$.
For $t\in (0,1]$,
let $s$ be the largest non-negative integer such that $st<1$.
We obtain $s+1$ closed intervals:
$$I_i=[(i-1)t,it],\ i=1,2,\cdots,s,\ I_{s+1}=[st,1].$$
In particular, if $t=1$, then $s=0$ and we have only one closed interval $I_1=[0,1]$. For $i=1,2,\cdots,s+1$, let
$\alpha_i=\max\{|I_i|, |f(I_i)|\}$, where $|J|$ is the length of a closed interval $J$,
and
\begin{align*}
a^i_b&=\max\{0,\min f(I_i)-4\alpha_i\}; \\
a^i_t&=\min\{1,\max f(I_i)+4\alpha_i\}; \\
a^i_l&=f(\min I_i);\\
a^i_r&=f(\max I_i).
\end{align*}
It is not hard to verify that if $I_i\cap f(I_i)\not=\emptyset$
then
\begin{equation}\label{1}
I_i\subset [a^i_b,a^i_t].
\end{equation}
It is clear that $\lambda_i^\alpha=(a^i_l,a^i_r,a^i_b,a^i_t,\alpha)\in \Lambda$
and then we define $\widetilde{H}^{\alpha}_t(f)$ on $I_i$ as the box map $\xi_{\lambda_i^\alpha}\in C(I_i,\II)$.
So $H_t^\alpha(f)$ is well-defined for $t\in (0,1]$.
By Lemma 2.2 of \cite{Kolyada-2151},
$\widetilde{H}^{\alpha}:C(\II)\times\II\to C(\II)$ is a homotopy.
Note that $\widetilde{H}^{\alpha}_0=\id_{C(\II)}$ and for every $f\in C(\II)$,
$\widetilde{H}^{\alpha}_1(f)$ is the box map on $\II$ with the parameter $(f(0),f(1),0,1,\alpha)$.
Now we construct another homotopy
$\widehat{H}^{\alpha}\colon \widetilde{H}^{\alpha}_1(C(\II))\times \II\to C(\II)$. For every $f\in C(\II)$ and $t\in[0,1]$ we define
$\widehat{H}^{\alpha}_t(\widetilde{H}^{\alpha}_1(f))$
to be the box map on $\II$ with the parameter $((1-t)f(0),(1-t)f(1),0,1,\alpha)$.
By Lemma 2.1 of \cite{Kolyada-2151}, $\widehat{H}^{\alpha}$
is continuous then it is a homotopy. 
It should be noticed that for every $f\in C(\II)$,
$\widehat{H}^{\alpha}_1(\widetilde{H}^{\alpha}_1(f))$ is the 
box map on $\II$ with the parameter $(0,0,0,1,\alpha)$.
Finally, we define a homotopy $H^\alpha\colon C(\II)\times\II\to C(\II)$ by joining $\widehat{H}^{\alpha}$ and
$\widetilde{H}^{\alpha}$, that is, for every $f\in C(\II)$,
$H_t^\alpha(f)=\widetilde{H}_{2t}^{\alpha}(f)$ for $t\in [0,\frac{1}{2}]$
and $H_t^\alpha(f)=\widehat{H}_{2(t-\frac{1}{2})}^{\alpha}(\widetilde{H}_{1}^{\alpha}(f))$ for $t\in (\frac{1}{2},1]$.

We have the following estimation of the topological entropy of $H_t^\alpha(f)$.
\begin{lemma}
	For every $t\in (0,1]$, $\alpha\ge 20$ and $f\in C(\II)$,
	we have
	\[
	\htop(H^\alpha_t(f))\geq \log([ \alpha-4]).
	\]
\end{lemma}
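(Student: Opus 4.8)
The plan is to split according to the two-phase definition of $H^\alpha$ and, in each phase, to exhibit an $[\alpha-4]$-horseshoe for $H^\alpha_t(f)$; the bound then follows at once from Lemma~\ref{lem:horseshoe}. For $t\in(\frac12,1]$ the argument is immediate: here $H^\alpha_t(f)=\widehat{H}^{\alpha}_{2(t-1/2)}(\widetilde{H}^{\alpha}_1(f))$ is by construction the box map on $\II$ with parameter $((1-s)f(0),(1-s)f(1),0,1,\alpha)$, where $s=2(t-\frac12)$. Its bottom and top levels are $a_b=0=\min\II$ and $a_t=1=\max\II$, so the hypothesis $a_b=a_0$, $a_t=a_1$ of Remark~\ref{rem:box-map-entropy} holds and that remark yields an $[\alpha-4]$-horseshoe directly. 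The same covers the junction $t=\frac12$, where $H^\alpha_{1/2}(f)=\widetilde H^\alpha_1(f)$ is the box map on $\II$ with $a_b=0$, $a_t=1$.

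For $t\in(0,\frac12]$ we have $H^\alpha_t(f)=\widetilde{H}^{\alpha}_{2t}(f)$, which on each block $I_i$ coincides with the box map $\xi_{\lambda_i^\alpha}$. The first step is to locate a single block on which the range already contains the domain. Since $f\colon\II\to\II$ is continuous, the intermediate value theorem applied to $x\mapsto f(x)-x$ (note $f(0)-0\ge 0$ and $f(1)-1\le 0$) gives a fixed point $x_0=f(x_0)$. If $x_0\in I_i$, then $x_0\in I_i\cap f(I_i)$, so $I_i\cap f(I_i)\neq\emptyset$ and hence, by \eqref{1}, $I_i\subset[a^i_b,a^i_t]$.

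The second step converts this inclusion into a horseshoe. By the box-map construction—this is precisely the lap count underlying Remark~\ref{rem:box-map-entropy}, which counts full laps from the constant-slope geometry and does not actually require $a_b=a_0$, $a_t=a_1$—the map $\xi_{\lambda_i^\alpha}$ possesses $[\alpha-4]$ monotone laps $L_1,\dots,L_{[\alpha-4]}$ with pairwise disjoint interiors, each of which $\xi_{\lambda_i^\alpha}$ carries homeomorphically onto the full range $[a^i_b,a^i_t]$. Because $I_i\subset[a^i_b,a^i_t]$, the set $J_j:=(\xi_{\lambda_i^\alpha}|_{L_j})^{-1}(I_i)$ is a non-degenerate closed subinterval of $L_j\subset I_i$ with $\xi_{\lambda_i^\alpha}(J_j)=I_i$. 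The intervals $J_j$ inherit pairwise disjoint interiors from the $L_j$, and for all $j,k$ one has $J_k\subset I_i=\xi_{\lambda_i^\alpha}(J_j)=H^\alpha_t(f)(J_j)$. Thus $\{J_1,\dots,J_{[\alpha-4]}\}$ is an $[\alpha-4]$-horseshoe of $H^\alpha_t(f)$, and Lemma~\ref{lem:horseshoe} gives $\htop(H^\alpha_t(f))\ge\log([\alpha-4])$.

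The main obstacle is the covering assertion in the second step: one must justify that each of the $[\alpha-4]$ full laps guaranteed by the box-map construction sweeps the entire range $[a^i_b,a^i_t]$, independently of whether that range coincides with the domain $I_i$. Granting this, passing from the laps $L_j$ to the preimages $J_j$ of $I_i$ is routine, and the fixed-point observation producing a suitable block $I_i$ is elementary. Finally, the hypothesis $t>0$ is essential, since $H^\alpha_0(f)=f$ may have zero entropy.
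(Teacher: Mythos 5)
Your proof is correct and follows essentially the same route as the paper: for $t\ge\frac12$ it invokes Remark~\ref{rem:box-map-entropy} for the box map on $\II$ with $a_b=0$, $a_t=1$, and for smaller $t$ it uses a fixed point of $f$ to find a block $I_i$ with $I_i\subset[a^i_b,a^i_t]$ via \eqref{1} and then extracts an $[\alpha-4]$-horseshoe from the full laps of the box map. The only (harmless) differences are that you make the fixed-point argument explicit and take the $J_j$ to be lap-preimages of $I_i$ rather than intervals mapping onto $[a^i_b,a^i_t]$; the covering property of the full laps that you flag as the "main obstacle" is exactly what the paper also takes for granted from the box-map construction.
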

\begin{proof}
	Fix $\alpha\ge 20$ and $f\in C(\II)$.
	By Remark~\ref{rem:box-map-entropy},
	we have $\htop(H^\alpha_t(f))\geq \log([ \alpha-4])$ for all 
	$t\in [\frac{1}{2},1]$.
	Now assume that $t\in[0,\frac{1}{2})$.
	By the construction of $H^\alpha_t$,
	there exists an interval $I_i$ and $x_0\in I_i$ such that $f(x_0)=x_0$.
	By the formula \eqref{1},
	we have $I_i\subset [a^i_b,a^i_t]$.
	Now by the construction of the box map on $I_i$,
	there exist closed subintervals $J_1$, $J_2$, $\dotsc$, $J_{[ \alpha-4]}$ of $I_i$ with disjoint interiors such that
	$H_t^\alpha(f) (J_j)=[a^i_b,a^i_t]$ for $j=1,2,\dotsc, [ \alpha-4]$.
	Then $J_1$, $J_2$, $\dotsc$, $J_{[ \alpha-4]}$
	form an $[ \alpha-4]$-horseshoe of $H_t^\alpha(f)$.
	By Lemma~\ref{lem:horseshoe}, $\htop(H^\alpha_t(f))\geq \log([ \alpha-4])$. 
\end{proof}

We summarize the above results as follows.
\begin{prop}\label{prop:homotopy-entropy}
	For every $\alpha\ge 20$, there exists
	a homotopy $H^\alpha:C(\II)\times\II\to C(\II)$ such that
	\begin{enumerate}
		\item $H_0^\alpha=\id_{C(\II)}$;
		
		\item  $\htop(H_t^\alpha(f))\geq\log([\alpha-4])$ for $t\in (0,1]$ and for every $f\in C(I)$;
		\item
		$H^{\alpha}_1(f)$ is the 
		box map on $\II$ with the parameter $(0,0,0,1,\alpha)$.
	\end{enumerate}
\end{prop}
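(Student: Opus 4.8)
The plan is to assemble the homotopy $H^\alpha$ that has already been constructed above by concatenating $\widetilde{H}^\alpha$ and $\widehat{H}^\alpha$, to verify that this concatenation is continuous, and then to read off the three asserted properties. Since the two constituent homotopies and the entropy estimate have all been established in the preceding discussion and lemma, the proof is essentially a matter of checking that the pieces fit together at the junction and transcribing what is already known.

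First I would confirm that $H^\alpha$ is a genuine homotopy, i.e.\ a continuous map $C(\II)\times\II\to C(\II)$. Each piece is continuous on its own time interval: $\widetilde{H}^\alpha$ is a homotopy by Lemma~2.2 of \cite{Kolyada-2151}, and $\widehat{H}^\alpha$ is continuous by Lemma~2.1 of \cite{Kolyada-2151}. To invoke the pasting lemma at the junction $t=\tfrac{1}{2}$, the two descriptions must agree there. The first piece gives $\widetilde{H}^\alpha_1(f)$, the box map on $\II$ with parameter $(f(0),f(1),0,1,\alpha)$; the second piece, evaluated at $t=\tfrac{1}{2}$, gives $\widehat{H}^\alpha_0(\widetilde{H}^\alpha_1(f))$, which by the definition of $\widehat{H}^\alpha$ is the box map with parameter $((1-0)f(0),(1-0)f(1),0,1,\alpha)$. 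These parameters coincide, so the two box maps are identical and the pasting lemma yields continuity of $H^\alpha$ on all of $C(\II)\times\II$.

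With continuity in hand, the three properties are immediate. Property~(1) holds because $H^\alpha_0(f)=\widetilde{H}^\alpha_0(f)=f$ by the definition of $\widetilde{H}^\alpha$ at $t=0$. Property~(2) is exactly the content of the lemma preceding this proposition, which established $\htop(H^\alpha_t(f))\geq\log([\alpha-4])$ for every $t\in(0,1]$ and every $f\in C(\II)$. Property~(3) follows because $H^\alpha_1(f)=\widehat{H}^\alpha_1(\widetilde{H}^\alpha_1(f))$, and by the construction of $\widehat{H}^\alpha$ at time $1$ this is the box map on $\II$ with parameter $(0,0,0,1,\alpha)$, independently of $f$.

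The only nontrivial point, and hence the main (though modest) obstacle, is the continuity of the concatenated map at $t=\tfrac{1}{2}$; once the agreement of the box-map parameters there is verified as above, the remainder of the proof is a direct transcription of the construction together with the entropy bound already proved.
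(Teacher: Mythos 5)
Your proposal is correct and follows essentially the same route as the paper, which states this proposition merely as a summary of the preceding construction of $\widetilde{H}^\alpha$ and $\widehat{H}^\alpha$ and the entropy lemma. Your explicit check that the two box-map parameters agree at the junction $t=\tfrac{1}{2}$ is a worthwhile detail the paper leaves implicit, but it does not change the argument.
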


Now  we are ready to prove Theorem \ref{thm:main1}.
\begin{proof}[Proof of Theorem \ref{thm:main1}]
	Fix $a\in[0,+\infty)$ and choose $\alpha\in[20,+\infty)$ such that
	$\log([ \alpha-4]) >a$.
	Let $H^\alpha$ as in the Proposition~\ref{prop:homotopy-entropy}.
	Then  both $E_{\geq a}$ and $E_{>a}$ are homotopy dense in $C(\II)$.
	Using the homotopies $ H^\alpha|_{E_{\geq a}\times\II} $ and $ H^\alpha|_{E_{>a} \times\II}$, both $E_{\geq a}$ and $E_{>a}$ are contractible.
	By Corollary~\ref{cor:Ea-open},
	both $E_{\geq a}$ and $E_{>a}$ are topologically complete.
	Now using Corollary~\ref{cor:CI-subspace},  $E_{\geq a}$ and $E_{>a}$
	are homeomorphic to $l_2$. 
\end{proof}

\begin{corollary}\label{cor:homotopy-dense}
	For every $a\in [0,+\infty)$, $E_{\geq a}$ and $E_{>a}$ are homotopy dense in $C(\II)$.
	Moreover, $E_{>a}\cap E_{<+\infty}$ is homotopy dense and open in $E_{<+\infty}$.\end{corollary}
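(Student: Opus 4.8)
The plan is to extract both statements directly from the homotopy $H^\alpha$ of Proposition~\ref{prop:homotopy-entropy}, fixing $\alpha\geq 20$ large enough that $\log([\alpha-4])>a$, exactly as in the proof of Theorem~\ref{thm:main1}. For the first assertion I would observe that $H^\alpha\colon C(\II)\times\II\to C(\II)$ satisfies $H_0^\alpha=\id_{C(\II)}$ and $\htop(H_t^\alpha(f))\geq\log([\alpha-4])>a$ for every $f\in C(\II)$ and every $t\in(0,1]$. Hence $H_t^\alpha(f)\in E_{>a}\subseteq E_{\geq a}$ whenever $t>0$, which is precisely the definition of homotopy density; so the single homotopy $H^\alpha$ witnesses that both $E_{>a}$ and $E_{\geq a}$ are homotopy dense in $C(\II)$. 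This is essentially a restatement of what was already used inside the proof of Theorem~\ref{thm:main1}.

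For the \emph{moreover} clause I would handle openness and homotopy density separately. Openness is immediate: by Corollary~\ref{cor:Ea-open} the set $E_{>a}$ is open in $C(\II)$, so its trace $E_{>a}\cap E_{<+\infty}$ is open in the subspace $E_{<+\infty}$. For homotopy density I would restrict the same homotopy, putting $G:=H^\alpha|_{E_{<+\infty}\times\II}$, and argue that $G$ is a homotopy of $E_{<+\infty}$ into itself with $G_0=\id$ and $G_t(f)\in E_{>a}\cap E_{<+\infty}$ for all $t\in(0,1]$ and $f\in E_{<+\infty}$.

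The one step that genuinely needs an argument, and which I expect to be the only real obstacle, is checking that $G$ never leaves the finite-entropy region, i.e.\ that $H^\alpha(E_{<+\infty}\times\II)\subseteq E_{<+\infty}$. For $t=0$ this holds because $G_0(f)=f\in E_{<+\infty}$ by hypothesis. For $t\in(0,1]$ the point is that, by construction, $H_t^\alpha(f)$ is obtained by gluing finitely many box maps (a single box map on $\II$ when $t>\frac{1}{2}$, and one box map on each of the finitely many intervals $I_i$ when $0<t\leq\frac{1}{2}$); each box map is piecewise linear with finitely many laps, so $H_t^\alpha(f)$ is piecewise monotone with finitely many laps and therefore has finite topological entropy. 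Thus $G_t(f)\in E_{<+\infty}$ for every $t$, and combining this with the lower bound $\htop(G_t(f))\geq\log([\alpha-4])>a$ gives $G_t(f)\in E_{>a}\cap E_{<+\infty}$ for $t\in(0,1]$. This yields the homotopy density and completes the proof, the whole argument reducing to the standard fact that a piecewise monotone interval map has entropy bounded by the logarithm of its number of laps.
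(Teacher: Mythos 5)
Your proposal is correct and follows essentially the same route as the paper: the first assertion is exactly the observation already made in the proof of Theorem~\ref{thm:main1}, and the \emph{moreover} clause rests on the same key fact the paper invokes, namely that the box maps produced by $H^\alpha_t$ for $t>0$ are piecewise monotone and hence have finite topological entropy, so the homotopy restricted to $E_{<+\infty}$ stays in $E_{<+\infty}$. Your explicit treatment of openness via Corollary~\ref{cor:Ea-open} is the same (implicit) reasoning as in the paper.
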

\begin{proof}
	The former was shown in the proof of Theorem~\ref{thm:main1}. To show the latter, we only note that the
	topological entropy of a piecewise monotone map is finite (see e.g. \cite[Proposition VIII.18]{Block-book-1992}).
\end{proof}

By Theorem~\ref{thm:entropy-LSC}
and Corollary~\ref{cor:homotopy-dense},
we know that the subspace $E_{+\infty}=\{f\in C(\II)\colon \htop(f)=+\infty\}$
is a dense $G_\delta$-set in $C(\II)$.
But the following question remains open.

\begin{problem}
	Is $E_{+\infty}$ homeomorphic to $l_2$?
\end{problem}

In Proposition~\ref{prop:homotopy-entropy},
for every $f\in C(\II)$ and $t\in (0,1]$, $H^\alpha_t(f)$ is  piecewise monotone and then  it has finite topological entropy.
So we can not use the method in the beginning of this section
to construct a proper homotopy to show that $E_{+\infty}$ is contractible.
Another important fact is that there is no continuous selection
of fixed points.
\begin{prop}
	There does not exist a continuous map $\phi:C(\II)\to\II$ such that $\phi(f)$ is a fixed point of $f$ for every $f\in C(\II).$
\end{prop}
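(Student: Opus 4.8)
The plan is to argue by contradiction: suppose such a continuous $\phi$ exists. Two elementary forcing principles will drive the argument, and I would record them first as lemmas. First, if $f\in C(\II)$ has a unique fixed point $p$, then necessarily $\phi(f)=p$; a short compactness argument (if $f_n\to f$ and $p_n\in\operatorname{Fix}(f_n)$ with $p_n\to q$, then $f(q)=q$) shows that $f\mapsto p$ is continuous on the open dense set $U\subset C(\II)$ of maps with a single fixed point. Second, and more generally, if $f_n\to f$ and every fixed point of $f_n$ lies in a neighbourhood of a point $a$ whose radius tends to $0$, then $\phi(f_n)\to a$, whence $\phi(f)=a$; thus $\phi$ is \emph{pinned} wherever the fixed-point set is confined to a small region.

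Next I would exploit the reflection $\sigma\colon C(\II)\to C(\II)$, $\sigma(f)(x)=1-f(1-x)$, which is a homeomorphic involution with $\operatorname{Fix}(\sigma(f))=1-\operatorname{Fix}(f)$. From the unique-fixed-point principle one checks that $\phi(f)+\phi(\sigma(f))=1$ for every $f\in U$, and hence, $U$ being dense, for every $f\in C(\II)$. In particular $\phi$ is forced to equal $\tfrac12$ on the nonempty set of $\sigma$-symmetric maps, each of which fixes $\tfrac12$ (at $x=\tfrac12$ the relation $\sigma(f)=f$ gives $1-f(\tfrac12)=f(\tfrac12)$). This symmetry is the device I would use to manufacture an orientation-reversing transition.

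With these tools in hand, the core of the proof is to construct an explicit continuous loop $\gamma\colon\II\to C(\II)$, $\gamma(0)=\gamma(1)$, built from piecewise-linear maps, along which the values $\phi(\gamma(t))$ are forced by the pinning principle to follow two incompatible branches of the fixed-point set, so that $\phi\circ\gamma$ is simultaneously continuous and discontinuous. The transition would be engineered by letting fixed points be created and annihilated, and by passing through a $\sigma$-symmetric map at which $\phi$ is pinned to $\tfrac12$, so that the branch tracked on the incoming arc is exchanged with the branch tracked on the outgoing arc.

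The main obstacle is precisely this construction. Because fixed points of an interval map are linearly ordered and the boundary conditions $f(0)\ge 0\ge f(1)-1$ force the extreme fixed points to be only semicontinuous, one cannot produce a jump by a naive one-parameter deformation: two distinct fixed points can never be interchanged without colliding, and at a collision a selection is free to switch branches. Overcoming this requires routing the exchange through the reflection symmetry (or, failing that, through a two-parameter family) and then verifying rigorously that the resulting fixed-point set over $\gamma$ admits no continuous section. Establishing this last point---that the forced branches genuinely fail to match up---is the technical heart of the argument, and it is easiest to control when $\gamma$ is taken in the class of piecewise-linear maps, where $\operatorname{Fix}(\gamma(t))$ is a transparent, finite, explicitly computable object.
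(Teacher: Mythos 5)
Your first ``forcing principle'' is exactly the right tool, but the proposal then abandons it for an elaborate loop construction that you yourself flag as unfinished: you write that engineering the branch exchange and verifying that ``the forced branches genuinely fail to match up'' is the main obstacle and the technical heart of the argument, and you do not carry it out. As it stands this is a plan, not a proof, and the plan is headed in a direction where the difficulty you identify is real --- a path of maps each having a unique fixed point gives a continuous selection $t\mapsto\operatorname{Fix}(\gamma(t))$ and hence no contradiction, and at a genuine collision or degeneration of fixed points a selection is indeed ``free to switch branches,'' so nothing is forced there without a further idea. The reflection symmetry and the identity $\phi(f)+\phi(\sigma(f))=1$ do not by themselves close this gap.

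What you have overlooked is that your first lemma already finishes the argument, with no loop needed: apply it to a sequence converging to a map whose fixed-point set is \emph{large}. This is the paper's proof. Take the identity map, whose every point is fixed, and pick $x_0\in(0,1)$ with $x_0\neq\phi(\id_{\II})$. The broken line maps $l_n$ through $(0,\tfrac1n)$, $(x_0,x_0)$, $(1,1-\tfrac1n)$ have slope $<1$ on each lap, hence the unique fixed point $x_0$, so your forcing principle gives $\phi(l_n)=x_0$; but $l_n\to\id_{\II}$ while $\phi(l_n)=x_0\not\to\phi(\id_{\II})$, a contradiction. In other words, the obstruction is not a global branch-exchange phenomenon along a loop but the purely local fact that near a map with a non-degenerate fixed-point interval one finds maps with a unique fixed point located anywhere in that interval. (A minor further point: the set $U$ of maps with a unique fixed point is dense but not open --- a tangential fixed point can bifurcate into two under an arbitrarily small perturbation --- though this does not affect the part of your argument that only uses continuity of the selection on $U$.)
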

\begin{proof}
	Suppose that $\phi:C(\II)\to\II$ is such a map.
	Choose $x_0\in \II\setminus\{0,1,\phi(\id_{\II})\}.$
	Let $l_n:\II\to\II$ be the broken line map through the points
	$(0,\frac{1}{n})$, $(x_0,x_0)$ and $(1,1-\frac{1}{n})$.
	Then $l_n\to \id_{\II}$ in $C(\II)$ as $n\to\infty$.
	Since $l_n$ has a unique fixed point $x_0$,
	$\phi(l_n)=x_0\not\to \phi(\id_{\II})$ as $n\to\infty$.
	So $\phi$ is not continuous, which is a contradiction.
\end{proof}

\section{Proof of Theorem~\ref{thm:main2}}
In this section we construct the homotopy in Theorem~\ref{thm:main2},
which is done by connecting three homotopies.

Inspired by \cite{Grinc} and \cite{JS91}, we introduce the following concept.
Let $f$ and $\bar f\in C([a,b],\II)$.
We say that $\bar f$ is made from $f$ by  {\bf procedure of making constant pieces} ({\bf PMCP}, briefly)
if there exists a sequence of open intervals $\{U_n\}_{n=1}^\infty$ of $[a,b]$ in the relative topology such that 
$\bar f|_{I\setminus\bigcup_{n=1}^\infty U_n}=  f|_{I\setminus\bigcup_{n=1}^\infty U_n}$ and $\bar f|_{U_n}$ is constant 
for every $n\in\mathbb{N}$.
It should be noticed that our definition here is more general than the one in \cite{Grinc}.
We will need  the following result which was proved in \cite[Lemma 5]{JS91}.
\begin{lemma}\label{lem 4}
	Let $f\in C(\II)$.
	If $\overline{f}$ is made from $f$ by  PMCP,
	then $\htop(\overline{f})\leq \htop(f).$
\end{lemma}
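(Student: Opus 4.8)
The plan is to prove $\htop(\bar f)\le\htop(f)$ in two stages: first for the case where only finitely many intervals are flattened, and then to deduce the general (countable) case by a uniform approximation together with the lower semicontinuity of the entropy function (Theorem~\ref{thm:entropy-LSC}). If $\htop(f)=+\infty$ there is nothing to prove, so I assume $\htop(f)<+\infty$ throughout.

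First I would reduce the finite case to the case of a single flattened interval. After discarding overlaps (merging any intervals that overlap or abut, which does not change $\bar f$), I may assume the $U_n=(p_n,q_n)$ are pairwise disjoint; continuity of $\bar f$ then forces $\bar f\equiv f(p_n)=f(q_n)$ on each $U_n$. Writing $g_k$ for the map obtained by flattening $f$ on $U_1,\dots,U_k$ only, each $g_k$ is again continuous, and $g_{k}$ is obtained from $g_{k-1}$ by flattening the single interval $U_k$ to the common boundary value $g_{k-1}(p_k)=g_{k-1}(q_k)$; so an induction reduces everything to the one-interval statement: if $g\in C(\II)$, $U=(p,q)$, $g(p)=g(q)=c$, and $\bar g$ equals $g$ off $U$ and equals $c$ on $U$, then $\htop(\bar g)\le\htop(g)$.

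For the one-interval statement I would work with the equivalent description of entropy through maximal $(n,\varepsilon)$-separated sets $s_n(\cdot,\varepsilon)$. Fix $\varepsilon>0$, fix $n$, and take a maximal $(n,\varepsilon)$-separated set $E$ for $\bar g$, classifying each $x\in E$ by the first time $0\le j(x)\le n-1$ at which its $\bar g$-orbit meets $U$ (with $j(x)=\infty$ if it never does). The crucial observation is that on $\II\setminus U$ one has $\bar g=g$, so points whose orbit avoids $U$ have coinciding $\bar g$- and $g$-orbits and form an $(n,\varepsilon)$-separated set for $g$; and once an orbit enters $U$ at time $j$ its next point is $c$, so all points with the same $j(x)=j$ have $\bar g$-orbits that coincide from time $j+1$ onward. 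Hence two points with $j(x)=j(y)=j$ must already be $\varepsilon$-separated among the first $j+1$ coordinates; at coordinates $0,\dots,j-1$ their orbits agree with the $g$-orbits, while at coordinate $j$ they both lie in $U$. This yields a bound of the form $s_n(\bar g,\varepsilon)\le (1+n\,M_\varepsilon)\,s_n(g,\varepsilon)$, where $M_\varepsilon$ is a geometric constant (roughly $|U|/\varepsilon+1$) absorbing the finitely many points that can be distinguished only by their position inside $U$. Since the prefactor is sub-exponential in $n$, dividing by $n$, letting $n\to\infty$ and then $\varepsilon\to0$ gives $\htop(\bar g)\le\htop(g)$.

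Finally, for the countable case I would note that the disjoint $U_n$ lie in a bounded interval, so $\sum_n|U_n|<\infty$ and in particular $|U_n|\to0$. Using uniform continuity of $f$, on each $U_n$ one has $|f(x)-c_n|\le\omega_f(|U_n|)$ for the modulus of continuity $\omega_f$; since $g_k$ and $\bar f$ differ only on $\bigcup_{m>k}U_m$, this forces $\|g_k-\bar f\|_\infty\le\sup_{m>k}\omega_f(|U_m|)\to0$, i.e.\ $g_k\to\bar f$ uniformly. By the finite case $\htop(g_k)\le\htop(f)$ for all $k$, and lower semicontinuity of $\htop$ gives $\htop(\bar f)\le\liminf_k\htop(g_k)\le\htop(f)$, as desired. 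I expect the main obstacle to be the bookkeeping in the separated-set count of the one-interval case---in particular isolating, and bounding by a sub-exponential factor, those pairs of points distinguished only by their positions inside the flattened interval---while the passage from finite to countable is the comparatively soft step, resting on uniform approximation and Theorem~\ref{thm:entropy-LSC}. Alternatively, the finite case is exactly \cite[Lemma~5]{JS91}, which one may simply quote and then upgrade by the same approximation argument.
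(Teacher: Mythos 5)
Your argument is correct, but there is nothing in the paper to compare it with step by step: the paper does not prove Lemma~\ref{lem 4}, it simply quotes it from \cite[Lemma~5]{JS91}. What you have written is a self-contained substitute, and its architecture holds up. The reduction to pairwise disjoint intervals (pass to the connected components of $\bigcup_n U_n$; on each component $\bar f$ is constant by a connectedness/chain argument and the constant matches the boundary values of $f$ by continuity), the induction down to a single flattened interval, the Bowen separated-set count for that case, and the upgrade from finitely to countably many intervals via $\|g_k-\bar f\|_\infty\le\sup_{m>k}\omega_f(|U_m|)\to 0$ combined with lower semicontinuity of $\htop$ (Theorem~\ref{thm:entropy-LSC}) all check out; in particular the semicontinuity is used in exactly the right direction, $\htop(\bar f)\le\liminf_k\htop(g_k)\le\sup_k\htop(g_k)\le\htop(f)$, and there is no circularity since Theorem~\ref{thm:entropy-LSC} is independent of this lemma. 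What your route buys, compared with the paper's bare citation, is a transparent and elementary proof whose only nontrivial external input is Misiurewicz's semicontinuity theorem. One remark on the one place you flag as delicate: the factor $M_\varepsilon$ is unnecessary, and dropping it spares you from handling the non-transitive relation ``distinguished only by position inside $U$''. If $j(x)=j(y)=j$, then $\bar g^{\,i}(x)=g^i(x)$ and $\bar g^{\,i}(y)=g^i(y)$ for every $i\le j$ (including $i=j$, because the $(j-1)$-st iterates lie outside $U$), so a pair separated at some time $i\le j$ for $\bar g$ is separated at the same time for $g$. Hence $E_j$ is $(j+1,\varepsilon)$-separated for $g$ and $|E_j|\le s_{j+1}(g,\varepsilon)\le s_n(g,\varepsilon)$ outright, giving the cleaner bound $s_n(\bar g,\varepsilon)\le (n+1)\,s_n(g,\varepsilon)$, which is all the subexponential prefactor you need.
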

For every $c\in\II$, the map $\max\{f(x),c\}$ can be thought to be made from $f$ by PMCP.
For every $f\in C([a,b],\II)$, let
\begin{align*}
M(f)&=\max\{f(x):x\in [a,b]\};\\
c_1(f)&=\min\{x\in [a,b]:f(x)=M(f)\};\\
c_2(f)&=\max\{x\in [a,b]:f(x)=M(f)\}.
\end{align*}
Now we define $\widetilde{f}:[a,b]\to \II$ as follows
\begin{equation*}
\ww{f}(x) =
\begin{cases}
\max\{f(t):a\leq t\leq x\}, & x\in [a,c_1(f)],\\
M(f), & x\in [c_1(f),c_2(f)],\\
\max\{f(t):x\leq t\leq b\}, & x\in [c_2(f),b].
\end{cases}
\end{equation*}
First we have the following lemma.
\begin{lemma}\label{lem 5}
	For any $f,g\in C([a,b],\II)$, we have
	\begin{enumerate}
		\item $\ww{f}$ is made from $f$ by  PMCP and it is in $C^{PM}([a,b],\II)$;
		
		\item  $\ww{f}(a)=f(a)$, $\ww{f}(b)=f(b)$ and $\ww{f}([a,b])\subset f([a,b])$;
		
		\item  $d(\ww{f},\ww{g})\leq d(f,g)$;
		
		\item if $c\in (a,b)$ and $\varepsilon>0$ satisfy either
		$$\max f|_{[a,c]}-\min f|_{[a,c]}<\varepsilon\ \ \mbox{or}\ \ \max f|_{[c,b]}-\min f|_{[c,b]}<\varepsilon,$$
		that is, the  amplitude of $f$ on $[a,c]$ or on $[c,b]$ is smaller than $\varepsilon$,
		then $$d(\ww{f},\ww{f|_{[a,c]}}\cup \ww{f|_{[c,b]}})<\varepsilon.$$
	\end{enumerate}
	
\end{lemma}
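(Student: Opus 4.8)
The plan is to analyze the map $\ww f$ directly from its explicit three-piece definition, proving each of the four assertions in turn. The key observation is that on $[a,c_1(f)]$ the function $x\mapsto \max\{f(t):a\le t\le x\}$ is a nondecreasing (hence monotone) function of $x$, and symmetrically on $[c_2(f),b]$ the function $x\mapsto\max\{f(t):x\le t\le b\}$ is nonincreasing; on the middle block it is constant. I would first record that these three formulas agree at the junction points $c_1(f)$ and $c_2(f)$ (both give $M(f)$ there) so $\ww f$ is well defined and continuous.

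For part (1), I would note that $\ww f$ is piecewise monotone because it is nondecreasing, then constant, then nonincreasing, which exhibits at most two laps and so lies in $C^{PM}([a,b],\II)$. To see it is made from $f$ by PMCP, I would argue that $\{x:\ww f(x)\ne f(x)\}$ is open in $[a,b]$ and decompose it into its countably many connected components $\{U_n\}$; the content to check is that $\ww f$ is \emph{constant} on each such component. This follows because on any maximal interval where the running maximum strictly exceeds $f$, the running maximum has not changed value (a running maximum only increases precisely when it equals the current value of $f$), so $\ww f$ is locally constant there; the analogous statement holds on the right piece, and the middle block is constant by construction. This is the step I expect to require the most care: pinning down that each component of disagreement is genuinely an interval of constancy of the running maximum, handling the two sides and the middle block uniformly, and confirming the components are open intervals of $[a,b]$ in the relative topology as the definition of PMCP demands.

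Part (2) is almost immediate from the formulas: at $x=a$ the running maximum is $f(a)$ and at $x=b$ it is $f(b)$, giving the endpoint values; and every value of $\ww f$ is of the form $\max\{f(t):t\in K\}$ for some subinterval $K\subseteq[a,b]$, hence equals $f(t_0)$ for some $t_0$ by continuity and compactness, so $\ww f([a,b])\subset f([a,b])$. For part (3), I would prove the pointwise inequality $|\ww f(x)-\ww g(x)|\le d(f,g)$ and then take the supremum. The estimate reduces to the elementary fact that $|\max_{t\in K}f(t)-\max_{t\in K}g(t)|\le\sup_t|f(t)-g(t)|$ over any fixed subinterval $K$; the only subtlety is that $c_1,c_2$ and the identity of which of the three cases applies may differ for $f$ and $g$, so I would instead use a single formula valid for the whole function. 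Concretely, $\ww f(x)=\max\{f(t):t\in K_x\}$ where $K_x=[a,x]$ for $x\le c_1(f)$, $K_x=[a,b]$ for $x$ in the middle, and $K_x=[x,b]$ on the right; checking the $f$-versus-$g$ comparison over these possibly different index sets is the point needing a short argument, and one clean route is to observe that in all three cases $\ww f(x)=\max\bigl(\max\{f(t):a\le t\le x\},\ \max\{f(t):x\le t\le b\}\,\mathbf{1}_{x\ge c_1(f)}\bigr)$ can be written uniformly, reducing everything to the one-sided running-maximum estimate.

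Finally, for part (4) I would treat the two symmetric hypotheses and assume, say, that the amplitude of $f$ on $[a,c]$ is less than $\varepsilon$. The comparison is between $\ww f$, built from the global running maxima, and the spliced map $\ww{f|_{[a,c]}}\cup\ww{f|_{[c,b]}}$, which restarts the running maximum at the splice point $c$. The two maps can only differ on $[c,b]$ where the global version may remember a large value attained on $[a,c]$ that the spliced version forgets; but since the amplitude of $f$ on $[a,c]$ is below $\varepsilon$, any such remembered value exceeds $f(c)$ by less than $\varepsilon$, so the discrepancy between the two running maxima is bounded by $\varepsilon$ at every point, giving the stated strict inequality after taking the supremum. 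I would present this as the one-sided estimate $0\le \ww f(x)-(\ww{f|_{[a,c]}}\cup\ww{f|_{[c,b]}})(x)<\varepsilon$, deduced by comparing $\max\{f(t):t\in[a,c]\}$ with $f(c)$; the mirror-image argument handles the other hypothesis.
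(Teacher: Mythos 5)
Your parts (1), (2) and (4) are essentially sound. For (4) the paper argues by the three cases $c\in[a,c_1(f)]$, $c\in[c_1(f),c_2(f)]$, $c\in[c_2(f),b]$, which is close to what you sketch; note only that your claim that $\ww{f}$ and the spliced map ``can only differ on $[c,b]$'' is not literally true (if, say, $c_2(f|_{[a,c]})<c$, the spliced map has a decreasing tail on $[c_2(f|_{[a,c]}),c]$ that $\ww{f}$ need not have), but this is harmless because under the amplitude hypothesis both maps take values in $[\min f|_{[a,c]},\max f|_{[a,c]}]$ on $[a,c]$, so the bound holds there for free.

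The genuine gap is in part (3): the ``uniform formula'' $\ww{f}(x)=\max\bigl(\max\{f(t):a\le t\le x\},\ \max\{f(t):x\le t\le b\}\,\mathbf{1}_{x\ge c_1(f)}\bigr)$ is false. For every $x\ge c_1(f)$ the first entry already equals $M(f)$, so your expression is identically $M(f)$ on $[c_1(f),b]$, whereas $\ww{f}(x)=\max\{f(t):x\le t\le b\}$ drops below $M(f)$ for $x>c_2(f)$ (take $f$ a single tent with peak at $\tfrac12$ and evaluate at $x=\tfrac34$). Moreover, even as written the formula still involves $c_1(f)$, so it would not eliminate the $f$-versus-$g$ case distinction you are trying to avoid. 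The repair is simple and makes your strategy work: the correct uniform expression is $\ww{f}(x)=\min\bigl(\max\{f(t):a\le t\le x\},\ \max\{f(t):x\le t\le b\}\bigr)$, which involves no breakpoints of $f$ at all; combined with $|\min(p,q)-\min(p',q')|\le\max(|p-p'|,|q-q'|)$ and the one-sided estimate $|\sup_K f-\sup_K g|\le\sup_K|f-g|$ it gives the pointwise bound $|\ww{f}(x)-\ww{g}(x)|\le d(f,g)$ at once (and it also yields the monotonicity $\ww{f|_{[a,c]}}\cup\ww{f|_{[c,b]}}\le\ww{f}$ used in (4)). The paper instead proves (3) by a case analysis on whether $[c_1(f),c_2(f)]$ and $[c_1(g),c_2(g)]$ intersect, bounding the difference on the gap between $c_2(f)$ and $c_1(g)$ by the values of $f-g$ at those two points; your route, once corrected, is arguably cleaner, but as submitted the key identity does not hold.
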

\begin{proof}
	(1) and (2) are obvious. We only need to show (3) and (4).
	
	(3)  We note that for any maps $h,k:J\to\II$,
	\begin{equation}\label{not=}
	\bigl|\sup \{h(x):x\in J\}-\sup \{k(x):x\in J\}\bigr|
	\leq\sup\{|h(x)-k(x)|:x\in J\}.
	\end{equation}
	It follows that (3) holds in the case
	$[c_1(f),c_2(f)]\cap [c_1(g),c_2(g)]\not=\emptyset$.
	For the case $[c_1(f),c_2(f)]\cap [c_1(g),c_2(g)]=\emptyset$, without loss of generality, we assume that $c_2(f)<c_1(g)$.
	For $x\in [a,b]\setminus [c_2(f),c_1(g)]$ using
	the formula \eqref{not=}, we have that
	$$|\ww{f}(x)-\ww{g}(x)|\leq d(f,g).$$
	If $x\in (c_2(f),c_1(g))$ and $\ww{f}(x)\geq\ww{g}(x)$, then
	$$0\leq\ww{f}(x)-\ww{g}(x)\leq f(c_2(f))-g(c_2(f))\leq d(f,g).$$
	If $x\in (c_2(f),c_1(g))$ and $\ww{g}(x)>\ww{f}(x)$, then
	$$0<\ww{g}(x)-\ww{f}(x)\leq g(c_1(g))-f(c_1(g))\leq d(f,g).$$
	Hence (3) holds in the case $[c_1(f),c_2(f)]\cap [c_1(g),c_2(g)]=\emptyset$.
	
	(4)  Without loss of generality,
	we assume that $\max f|_{[a,c]}-\min f|_{[a,c]}<\varepsilon$.
	By (2), $h=\ww{f|_{[a,c]}}\cup \ww{f|_{[c,b]}}\in C([a,b],\II)$.
	
	\textbf{Case A}: $c\in [c_1(f),c_2(f)]$.
	By the assumption, we have $M(f)-\varepsilon<f(c)\leq M(f)$.
	It follows that
	$$M(f)-\varepsilon< f(c)\leq h(x)\leq M(f)=\ww{f}(x),\ \ x\in [c_1(f),c_2(f)].$$
	Hence
	$$|\ww{f}(x)-h(x)|<\varepsilon,\ \  x\in [c_1(f),c_2(f)].$$
	Moreover, it is trivial that $\ww{f}(x)=h(x)$ for $x\in [a,b]\setminus [c_1(f),c_2(f)]$. Hence
	$$d(\ww{f},h)<\varepsilon.$$
	
	\textbf{Case B}: $c\in [a,c_1(f)]$. In this case,
	$$\ww{f}(x)=h(x),\ \ \ x\in [c_1(f),b].$$
	Moreover, by the assumption in (4),
	$$|\ww{f}(x)-h(x)|<\varepsilon,\ \ \  x\in [a,c].$$
	Furthermore,  for every $x\in [c,c_1(f)]$,
	$$h(x)=\ww{f|_{[c,b]}}(x)\leq \ww{f}(x)< \ww{f|_{[c,b]}}(x)+\varepsilon=h(x)+\varepsilon.$$
	Therefore, $$d(\ww{f},h)<\epsilon.$$
	
	\textbf{Case C}: $c\in [c_2(f),b]$.
	By the assumption, $M(f)-\varepsilon<f(c)\leq M(f)$.
	It follows that
	\begin{equation}\label{2}
	M(f)-\varepsilon<f(c)\leq f(c_2(f|_{[c,b]}))\leq M(f).
	\end{equation}
	Note that
	$$\ww{f}(x)=\ww{f|_{[c,b]}}(x)\leq f(c_2(f|_{[c,b]})),\ \  x\in [c_2(f|_{[c,b]}),b].$$
	Moreover, using this and the formula (\ref{2}), we have
	\[|\ww{f}(x)-h(x)|<\varepsilon,\ \ \ x\in [a, c_2(f|_{[c,b]})].\]
	So in this case we also have $d(\ww{f},h)<\varepsilon $.
\end{proof}

Using the above, we can give the  first homotopy.
\begin{lemma}\label{lem:homotopy1}
	There exists a homotopy $H^1:C(\II)\times \II\to C(\II)$ such that
	\begin{enumerate}
		\item $H^1_0=\id_{C(\II)}$;
		
		\item  $\htop(H^1_t(f))\leq \htop(f)$ and
		$H^1_t(f)\in C^{PM}(\II)$ for $t\in (0,1]$ and $f\in C(\II)$.
	\end{enumerate}
\end{lemma}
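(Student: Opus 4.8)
The plan is to imitate the partition-and-replace construction of the homotopy in Section~3, but to replace the entropy-raising box maps by the entropy-non-increasing ``hump'' operation $f\mapsto\ww f$. Set $H^1_0(f)=f$, and for $t\in(0,1]$ let $s$ be the largest non-negative integer with $st<1$, put $I_i=[(i-1)t,it]$ for $1\le i\le s$ and $I_{s+1}=[st,1]$, and define
$$H^1_t(f)=\bigcup_{i=1}^{s+1}\ww{f|_{I_i}}.$$
By Lemma~\ref{lem 5}(2) the pieces $\ww{f|_{I_i}}$ agree with $f$, hence with one another, at the common breakpoints $it$, so they glue to a well-defined element of $C(\II)$; and since each $\ww{f|_{I_i}}$ is non-decreasing, then constant, then non-increasing, we have $H^1_t(f)\in C^{PM}(\II)$ for every $t>0$. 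Condition~(1) is immediate.

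For the entropy estimate I would verify that $H^1_t(f)$ is made from $f$ by PMCP on all of $\II$. By Lemma~\ref{lem 5}(1), on each $I_i$ the map $\ww{f|_{I_i}}$ differs from $f$ only on a countable union of open intervals on which it is constant; these intervals can be taken inside $\Int(I_i)$, because $\ww{f|_{I_i}}$ and $f$ agree at the endpoints of $I_i$, and each is then open in $\II$ as well. Taking the union of these countably many families over the finitely many indices $i$ exhibits $H^1_t(f)$ as made from $f$ by PMCP, so Lemma~\ref{lem 4} gives $\htop(H^1_t(f))\le\htop(f)$. This is condition~(2).

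It remains to prove that $H^1\colon C(\II)\times\II\to C(\II)$ is continuous, which is the main obstacle. Applying Lemma~\ref{lem 5}(3) on each $I_i$ and taking the maximum over $i$ yields $d(H^1_t(f),H^1_t(g))\le d(f,g)$ for every fixed $t$, so $H^1_t$ is non-expansive in $f$ uniformly in $t$; by the triangle inequality it therefore suffices to prove that $t\mapsto H^1_t(f)$ is continuous for each fixed $f$. At $t_0=0$ this is direct: by uniform continuity of $f$, once $t$ is small enough every $I_i$ has length at most $t$, so $f(I_i)$ has diameter $<\varepsilon$; since both $f(x)$ and $\ww{f|_{I_i}}(x)$ lie in $f(I_i)$ by Lemma~\ref{lem 5}(2), we obtain $d(H^1_t(f),f)\le\varepsilon$, and hence $H^1_t(f)\to f$ as $t\to 0^+$.

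The delicate case is $t_0\in(0,1]$, where the partition itself varies with $t$: its breakpoints $it$ move continuously, and at the exceptional values $t_0=1/n$ their number also jumps, a short trailing interval $[st,1]$ appearing as $t\to t_0^-$. I would treat both phenomena uniformly by comparing $H^1_t(f)$ and $H^1_{t_0}(f)$ with the map $g_Q$ obtained by applying $\ww{\,\cdot\,}$ on each interval of the common refinement $Q$ of their two breakpoint sets. For $t$ close enough to $t_0$ the two breakpoint families interleave, so passing from either partition to $Q$ inserts into each of its intervals at most one new breakpoint, which splits off a piece of length at most $s|t-t_0|$; by uniform continuity this piece has amplitude $<\varepsilon$, whence Lemma~\ref{lem 5}(4) bounds the resulting change on that interval by $\varepsilon$. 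The crucial point is that these modifications take place on pairwise disjoint intervals, so the individual errors combine as a supremum rather than a sum, giving $d(H^1_t(f),g_Q)\le\varepsilon$ and $d(H^1_{t_0}(f),g_Q)\le\varepsilon$, hence $d(H^1_t(f),H^1_{t_0}(f))\le 2\varepsilon$. Assembling these estimates establishes joint continuity and completes the construction.
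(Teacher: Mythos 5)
Your construction and proof coincide with the paper's: the same partition $\{I_i\}$, the same piecewise application of $f\mapsto\ww{f}$, well-definedness and the entropy bound via Lemma~\ref{lem 5}(1)--(2) and Lemma~\ref{lem 4}, non-expansiveness in $f$ via Lemma~\ref{lem 5}(3), and continuity in $t$ handled at $t_0=0$ by uniform continuity and at $t_0\in(0,1]$ by comparing both maps to the one built on the common refinement of the two partitions using Lemma~\ref{lem 5}(4). This is essentially the paper's argument, with a slightly more explicit justification that the constant pieces glue into a single PMCP modification on all of $\II$.
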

\begin{proof}
	In the same way as in the construction of the homotopy $H^\alpha$ in Section 3, let $H^1_0=\id_{C(\II)}$, and for $t\in (0,1]$, let $s$ be the largest non-negative integer
	such that $st<1$.
	We can obtain $s+1$ closed intervals:
	$$I_i=[(i-1)t,it], i=1,2,\cdots,s, I_{s+1}=[st,1].$$
	The integer $s$ and the interval $I_i$ are also denoted by $s(t)$ and $I^t_i$ if necessary. We define $H_t^1$ such that, for every $f\in C(\II)$ and $i=1,2,\dotsc,s+1$,
	$$H_t^1(f)|_{I_i}=\ww{f|_{I_i}}.$$
	Using Lemma \ref{lem 5}(2),
	$H^1:C(\II)\times \II\to C(\II)$ is well-defined.
	Trivially, it satisfies (1).
	From Lemmas \ref{lem 4} and \ref{lem 5}(1)
	it follows that it satisfies (2).
	It remains to verify that
	$H^1:C(\II)\times \II\to C(\II)$ is continuous.
	
	At first, we show that $H^1(f,\cdot\,)$ is continuous for every fixed $f\in C(\II)$. For every $\varepsilon>0$, there exists $\delta\in (0,1)$ such that
	\begin{equation}\label{u-con}
	|x_1-x_2|<\delta\ \ \ \mbox{implies}\ \ \ |f(x_1)-f(x_2)|<\frac{\varepsilon}{2}.
	\end{equation}
	Now, for every $t_0\in\II$, we verify that there exists $\delta(t_0)\in (0,\delta]$ such that
	\begin{equation}\label{pur}
	|t-t_0|<\delta(t_0)\ \ \ \mbox{implies}\ \ \ d(H^1(f,t),H^1(f,t_0))<\varepsilon,
	\end{equation}
	which shows that $H^1(f,\cdot\,)$ is continuous.
	
	If $t_0=0$, we let $\delta(t_0)=\delta$.
	For every $t\in (0,\delta)$ and $i$, from Lemma \ref{lem 5}(2)
	it follows that
	$$\ww{f|_{I_i}}(I_i)\subset f(I_i).$$
	Since $|I_i|\leq t<\delta$,
	using the formula (\ref{u-con}), we have $|f(I_i)|<\varepsilon$.
	Thus the formula (\ref{pur}) holds.
	
	If $t_0\in (0,1]$, choose $\delta(t_0)\in (0,\delta)$ small enough such that for every $t\in \II\cap (t_0-\delta(t_0),t_0+\delta(t_0))$,
	we have $|s(t_0)-s(t)|<2 $ and  $(s(t_0)+2)\delta(t_0)<\delta$.
	Then all points $\{it,jt_0\}$ divide $\II$
	into closed intervals $\{J_j\}$.
	Let
	$$G=\bigcup \ww{f|_{J_j}}\in C(\II).$$
	Then, for every $i$, $I^{t_0}_i$ is either a union of the two closed intervals in  $\{J_j\}$  or just a closed interval in $\{J_j\}$. If the former holds, then by the choice of $\delta(t_0)$ and the formula (\ref{u-con}), the amplitude
	of $f$ in one of the two closed intervals
	is smaller than $\frac{\varepsilon}{2}$.
	Using Lemma \ref{lem 5}(4), we have that $$d(H^1(f,t_0)|_{I_i^{t_0}},G|_{I_i^{t_0}})<\frac{\varepsilon}{2}.$$
	If the later holds, then $H^1(f,t_0)|_{I_i^t}=G|_{I_i^t}$
	and hence the above formula also holds. Thus,
	$$d(H^1(f,t_0),G)<\frac{\varepsilon}{2}.$$
	Similarly, we have that
	$$d(H^1(f,t),G)<\frac{\varepsilon}{2}.$$
	Hence the formula (\ref{pur}) holds.
	
	By Lemma \ref{lem 5}(3),
	we can obtain that $d(H^1(f,t),H^1(g,t))\leq d(f,g)$.
	In combination with the continuity of $H^1$ on $t$,
	we have that $H^1:C(\II)\times \II\to C(\II)$ is jointly continuous.
\end{proof}

The second homotopy we need is the following.
\begin{lemma}\label{lem:homotopy2}
	There exists a homotopy $H^2:C(\II)\times \II\to C(\II)$ satisfying
	\begin{enumerate}
		\item $H^2_0=\id_{C(\II)}$;
		
		\item  $\htop(H^2_t(f))\leq \htop(f)$ and $H^2_t(C^{PM}(\II))\subset C^{PM}(\II)$ for any $t\in (0,1]$ and $f\in C(\II)$;
		
		\item  $H^2_1(f)$ is a constant map for any $f\in C(\II)$.
	\end{enumerate}
\end{lemma}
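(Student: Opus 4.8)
The plan is to realize the required contraction to a constant by a ``flattening from below'': define
\[
H^2_t(f)(x)=\max\{f(x),t\},\qquad x\in\II,\ t\in\II,
\]
so that $H^2_t(f)$ is the pointwise maximum of $f$ with the constant function of value $t$. Properties (1) and (3) are then immediate from $f(\II)\subset\II=[0,1]$: at $t=0$ we have $\max\{f(x),0\}=f(x)$, whence $H^2_0=\id_{C(\II)}$, and at $t=1$ we have $\max\{f(x),1\}=1$, so $H^2_1(f)$ is the constant map $1$, as required for (3). (Symmetrically one could set $H^2_t(f)=\min\{f,1-t\}$ and land on the constant $0$ directly; I keep $\max$ to match the PMCP remark recorded just before Lemma~\ref{lem 5}.)

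For the entropy estimate in (2), I would invoke exactly that remark: for each $t$ the set $\{x\in\II:f(x)<t\}$ is open, hence a countable union of disjoint relatively open intervals $U_n$, while $\max\{f,t\}$ equals $t$ on each $U_n$ and equals $f$ off $\bigcup_n U_n$. Thus $H^2_t(f)$ is made from $f$ by PMCP, and Lemma~\ref{lem 4} yields $\htop(H^2_t(f))\le\htop(f)$. For the preservation of piecewise monotonicity, suppose $f\in C^{PM}(\II)$ with monotone laps $[t_{i-1},t_i]$. On each lap the super-level set $\{x:f(x)\ge t\}$ is a subinterval, so $\max\{f,t\}$ equals the constant $t$ on its complement within the lap and equals the monotone $f$ on it; these match at the junction, so $\max\{f,t\}$ is monotone on $[t_{i-1},t_i]$. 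Hence $H^2_t(f)\in C^{PM}(\II)$, with no more laps than $f$.

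The only genuinely analytic point is joint continuity of $H^2$, and here the situation is far simpler than for $H^1$. From the elementary inequality $|\max\{a,b\}-\max\{c,d\}|\le\max\{|a-c|,|b-d|\}$ applied pointwise I would deduce
\[
d\bigl(H^2_t(f),H^2_s(g)\bigr)\le\max\{d(f,g),|t-s|\},
\]
so $H^2$ is Lipschitz, hence continuous, on $C(\II)\times\II$; continuity at $t=0$ in particular follows from $\|\max\{f,t\}-f\|_\infty\le t$. I therefore do not expect a serious obstacle: the real content is conceptual rather than technical, namely that a naive straight-line homotopy $(1-t)f+t\cdot c$ to a constant is forbidden, since convex combinations can raise the entropy (as the examples in Section~2 show), whereas the monotone flattening $\max\{f,t\}$ never creates new oscillations and so cannot increase entropy. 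Finally I would note that this $H^2$ terminates at the constant $1$, so carrying out the full program of Theorem~\ref{thm:main2} still needs a third homotopy moving the constant value from $1$ down to $0$, which is trivial since all constant maps have zero entropy.
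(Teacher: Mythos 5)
Your construction is essentially the paper's: the paper takes $H^2_t(f)=\max\{f,\,(M(f)-m(f))t+m(f)\}$, flooding from $\min f$ up to $\max f$, while you flood with the absolute level $t$ from $0$ up to $1$; both rely on the same observation that $\max\{f,c\}$ is obtained from $f$ by PMCP, so Lemma~\ref{lem 4} gives the entropy bound, and both preserve piecewise monotonicity and end at a constant map. Your proof is correct, and your explicit Lipschitz estimate for joint continuity is a slightly more careful version of the paper's ``not hard to verify.''
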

\begin{proof}
	For every $f\in C(\II)$, let
	$$M(f)=\max \{f(x):x\in \II\},\ \ \ \ m(f)=\min \{f(x):x\in \II\}.$$
	Then $M,m:C(\II)\to\II$ are continuous. Using them,  we can define our homotopy as follows
	$$ H^2(f,t)(x) =\max\{f(x), (M(f)-m(f))t+m(f))\}.$$
	Then it is not hard to verify that $H^2:C(\II)\times\II\to C(\II)$ is continuous and it satisfies (1) and (3).
	Moreover, $H^2(f,t)$ is made from $f$ by PMCP.
	It follows from Lemma~\ref{lem 4} that $H^2$ also satisfies (2).
\end{proof}

The third homotopy $H^3:\II^2\to\II$ is defined as  
$$H^3(s,t)=(1-t)s,$$ which is a homotopy between the identical map and the constant map $0$ in $\II$.
Now we are ready to prove Theorem \ref{thm:main2}.
\begin{proof}[Proof of Theorem \ref{thm:main2}]
	Define $H:C(\II)\times\II\to C(\II)$ as
	\begin{equation*}
	H(f,t) =
	\begin{cases}
	H^1(f,3t),& t\in [0,\frac{1}{3}),\\
	H^2(H^1(f,1),3t-1), & t\in [\frac{1}{3},\frac{2}{3}),\\
	H^3(H^2(H^1(f,1),1),3t-2), & t\in [\frac{2}{3},1].
	\end{cases}
	\end{equation*}
	Since $H^2(H^1(f,1),1)$ is a constant map,
	the homotopy $H$ is well-defined.
	Note that $\htop(c)=0$ for every constant map $c$.
	By Lemmas~\ref{lem:homotopy1} and \ref{lem:homotopy2},
	it is easy to see that
	$H:C(\II)\times\II\to C(\II)$ is the homotopy as required.
\end{proof}

It follows from Corollary \ref{cor:homotopy-dense} and Theorem~\ref{thm:entropy-LSC} that $E_{\le a}$ is nowhere dense and closed in the space $E_{<+\infty}$.
Hence
$$E_{<+\infty}=\bigcup_{n\in\N}E_{\le n}$$
is not topologically complete.
Therefore, $E_{<+\infty}$ is not homeomorphic to $l_2$.
It is natural to put the following problem:
\begin{problem}
	Does there exist $a\in (0,+\infty)$ such that
	$E_{<a}$ is homeomorphic to $l_2$?
\end{problem}
For every $a\in [0,+\infty)$, by Corollary~\ref{cor:E-ls-a}, we know that $E_{\le a}$ is contractible.
By Theorem~\ref{thm:entropy-LSC},
$E_{\le a}$ is a closed subset of $C(\II)$ and hence it is topologically complete. But the following problem is still open.
\begin{problem}
	Is $E_{\le a}$ homeomorphic to $l_2$ for every $a\in [0,+\infty)$? In particular, is $E_{\le 0}$ homeomorphic to $l_2$?
\end{problem}

By Anderson-Kadec's theorem, $l_2$ is homeomorphic to $\R^\N$, then it is also homeomorphic to $s=(-1,1)^\N$.
Let
\begin{align*}
Q&=[-1,1]^\N, \\
\Sigma&=\{(x_n)\in Q:\sup |x_n|<1\},\\
P_{\prec 2^\infty}&=\{f\in C(\II):\mbox{there exists }n \in\mathbb{N}
\mbox{ such that }\\
&\qquad \qquad\qquad\mbox{ the set of periods of}~f~\mbox{is}~ \{2^i:0\leq i\leq n\}\}.
\end{align*}
Using these symbols, we have the following problem:
\begin{problem} For every $a\in (0,+\infty]$, does there exist a homeomorphism $h:E_{\leq a}\to s\times Q$ such that $h(E_{<a})=s\times \Sigma$?
	Does there exist  a homeomorphism $h:E_{\leq 0}\to s\times Q$ such that $h(P_{\prec 2^\infty})=s\times \Sigma$?
\end{problem}

\subsection*{Acknowledgments}
The authors would like to thank the
anonymous referees for the careful reading and helpful suggestions.

\bibliographystyle{amsplain}
 
\providecommand{\bysame}{\leavevmode\hbox to3em{\hrulefill}\thinspace}
\providecommand{\MR}{\relax\ifhmode\unskip\space\fi MR }
\providecommand{\MRhref}[2]{%
	\href{http://www.ams.org/mathscinet-getitem?mr=#1}{#2}
}
\providecommand{\href}[2]{#2}

\end{document}